\title{Reconstructing the topology on monoids and polymorphism clones of reducts of the rationals}
\author{John K. Truss, University of Leeds, and}
\date{Edith Vargas-Garc\'\i a, ITAM, Mexico.}
\begin{document}
\maketitle 
\newtheorem{lemma}{Lemma}[section]
\newtheorem{theorem}[lemma]{Theorem}
\newtheorem{corollary}[lemma]{Corollary}
\newtheorem{definition}[lemma]{Definition}
\newtheorem{remark}[lemma]{Remark}
\setcounter{footnote}{1}\footnotetext{Department of Pure Mathematics, University of Leeds, Leeds LS2 9JT, UK, and Departamento Acad\'emico de Matem\'aticas, ITAM.
R\'io Hondo No. 1, Col. Tizap\'an San Angel, Del. Alvaro Obreg\'on, C.P. 01080 Ciudad de M\'exico, e-mails pmtjkt@leeds.ac.uk, edith.vargas@itam.mx. The second author gratefully acknowledges the financial 
support of Asociaci\'on Mexicana de Cultura A.C.}

\newcounter{number}

\begin{abstract} We extend results from an earlier paper giving reconstruction results for the endomorphism monoid of the rational numbers under the strict and reflexive relations to the first order reducts 
of the rationals and the corresponding polymorphism clones. We also give some similar results about the coloured rationals. \end{abstract}

2010 Mathematics Subject Classification 08A35

keywords: rationals, automatic homeomorphicity, embedding, endomorphism, polymorphism clone

\section{Introduction}
In \cite{Truss1} we showed how to reconstruct the topology on the endomorphism monoid of the set of rational numbers, under 
the strict and reflexive relations $<$ and $\le$ and the polymorphism clone Pol$({\mathbb Q}, \le)$. This is a so-called 
`automatic-homeomorphicity' result, meaning that isomorphisms of a certain form must necessarily also be homeomorphisms. In 
this paper we extend these results to the reducts of the rationals described by Cameron in \cite{Cameron}, which are the 
betweenness relation, circular order, and separation relation on $\mathbb Q$. (The other reduct, namely the trivial 
structure, is already treated in \cite{Bodirsky1}.) We also consider the case of the coloured version of the rationals 
${\mathbb Q}_C$ for a set of colours with $2 \le |C| \le \aleph_0$, here just dealing with the analogues of the reducts of 
$\mathbb Q$ rather than {\em all} reducts (which according to \cite{Junker} may be quite complicated). In most cases the 
earlier results can be invoked fairly directly, or else suitably adapted. En route we need to verify the small index property 
for the automorphism group of the circular rational order $({\mathbb Q}, circ)$, and the corresponding results in the coloured 
case, which may or may not have been remarked before (or are `folklore'). 

By saying that a transformation monoid $M$ on a countable set $\Omega$ has {\em automatic homeomorphicity} is meant that any 
isomorphism from $M$ to a closed submonoid of the full transformation monoid $Tr(\Omega')$ on a countable set $\Omega'$ is 
necessarily a homeomorphism. Here the topology on $M$ is given by taking as basic open sets 
${\cal B}_{B C} = \{f \in M: fB = C\}$ where $B$ and $C$ are finite subsets of $\Omega$. The motivation for studying such 
properties is that in the case where $M$ is a (closed) subgroup of the symmetric group on $\Omega$, $M$ has automatic 
homeomorphicity if and only if it has the small index property SIP, which says that any subgroup of $M$ of index 
$< 2^{\aleph_0}$ contains the pointwise stabilizer of a finite set. Thus automatic homeomorphicity represents a statement 
which one can attempt to establish even when the statement of SIP makes no sense (in the monoid case, where we do not have 
Lagrange's Theorem).

The key steps presented in \cite{Truss1} were as follows. For $M = {\rm Emb}({\mathbb Q}, \le)$, the monoid of 
order-preserving embeddings of $\mathbb Q$, we established that any injective endomorphism of $M$ which fixes all elements of 
$G = {\rm Aut}({\mathbb Q}, \le)$, is the identity. Then invoking the fact that $G$ has the small index property 
\cite{Truss2}, that $M$ is the closure of $G$ under the above topology, and Lemma 12 from \cite{Bodirsky1}, automatic 
homeomorphicity of $M$ follows at once. To prove automatic homeomorphicity of $E = {\rm End}({\mathbb Q}, \le)$, in which $G$ 
is {\em not} dense, a more detailed analysis of the way that $E$ acts on $\Omega'$ was required; and the corresponding proof 
for the polymorphism clone Pol$({\mathbb Q}, \le)$ was given.

Here we follow a similar method for the following cases: $({\mathbb Q}, betw)$, $({\mathbb Q}, circ)$, and 
$({\mathbb Q}, sep)$, which are defined as follows:

$betw(x, y, z)$ if $x \le y \le z$ or $z \le y \le x$,

$circ(x, y, z)$ if $x \le y \le z$ or $y \le z \le x$ or $z \le x \le y$,

$sep(x, y, z, t)$ if $circ(x, y, z) \wedge circ(x, t, y)$ or  $circ(x, z, y) \wedge circ(x, y, t)$.

We also treat the analogues for these of the $C$-coloured rationals ${\mathbb Q}_C$, where $2 \le |C| \le \aleph_0$, which is 
defined to be $\mathbb Q$ together with a colouring function $F: {\mathbb Q} \to C$ such that for all $c \in C$, $F^{-1}\{c\}$ 
is a dense subset of $\mathbb Q$. This exists and is unique up to isomorphism (and is homogeneous and for the case that $C$ 
is finite, $\aleph_0$-categorical). The three structures that we treat for $\mathbb Q$ are the proper `reducts' of 
$({\mathbb Q}, \le)$, as was shown in \cite{Cameron}. The analogous structures for the coloured case are indeed reducts of 
$({\mathbb Q}_C, \le)$; however, they are not {\em all} the reducts here, and the complete list of reducts of 
$({\mathbb Q}_C, \le)$ is currently unknown; see \cite{Junker} section 6 for a discussion of this case (for finite $C$).

All these structures come in reflexive versions, as we have stated them, and strict versions. However, in the present context 
it is good enough to distinguish these by means of monoids which capture the same ideas. Thus ${\rm Emb}({\mathbb Q}, betw)$ 
for instance is the family of 1--1 maps of $\mathbb Q$ which preserve $betw$, and these are precisely the maps which preserve 
the corresponding `strict betweenness' $x < y < z \vee z < y < x$. As in \cite{Truss1}, automatic homeomorphicity is 
established for the monoid of embeddings (the endomorphisms preserving the strict relation), the monoid of all endomorphisms 
(preserving just the reflexive relation), and the polymorphism clone for the reflexive relation.

Finally we give the result from section 6 of \cite{Truss1} which deduces automatic homeomorphicity for the polymorphism clone 
generated by a monoid in the cases discussed here.

Throughout we use $G$, $M$, $E$, $P$ to stand for the automorphism group of the structure being considered, or its monoid of
embeddings, or endomorphisms, or the polymorphism clone of the reflexive relation, respectively.

We outline the methods used in \cite{Truss1}, and explain how they are adapted here. Of the three cases treated, that of 
the monoid of embeddings $M$ is the easiest, in that we can concentrate on proving a combinatorial lemma, and then appeal to
the results of \cite{Bodirsky1} mentioned to complete the proof. The combinatorial lemma proved, which is attractive in its 
own right, is that any injective endomorphism $\xi$ of $M$ which fixes $G$ pointwise is equal to the identity. That this 
statement is related to reconstruction matters is plausible, in that we are saying that somehow the monoid can be `captured' or
`described' from the group (and reconstruction from the group is just the small index property). The method for proving this
lemma presented in \cite{Truss1} involved consideration of a certain family $\Gamma$ of members of $M$, which intuitively are
those whose image is as `spread out' as possible. It was shown that members of $\Gamma$ are fixed by $\xi$, and the 
proof was completed by showing that all members of $M$ can be suitably expressed in terms of members of $\Gamma$. In fact, for 
$(\mathbb Q, \le)$ (and in this paper also for $({\mathbb Q}_C, \le)$) it was also necessary to consider variants of $\Gamma$, 
written $\Gamma^+$, $\Gamma^-$, and $\Gamma^\pm$, allowing for members of $M$ with support bounded above or below or both. Here
since we are dealing with suitable reducts, mainly based on the circular ordering of $\mathbb Q$, only the analogue of $\Gamma$
is needed. For completeness we give the construction and arguments, even though these amount to straightforward modifications 
of the ones given in \cite{Truss1}.

The harder part of the argument given in \cite{Truss1} was for the monoid of endomorphisms $E$ of $(\mathbb Q, \le)$. The 
scenario we have to consider is that in which there is an isomorphism $\theta$ from $E$ to a closed submonoid $E'$ of the full 
transformation monoid $Tr(\Omega)$ on a countable set $\Omega$, and we have to show that it is a homeomorphism. The first step
was to show that the image of $M$ under $\theta$ is a closed subset of $E'$, which was Lemma 4.1 of \cite{Truss1}. From that
we deduce by what has already been shown that on $M$, $\theta$ is a homeomorphism. Next we consider the orbits of the action
of $G$ on $\Omega$, and show by appeal to the small index property that each of these can be identified with the family
$[{\mathbb Q}]^n$ of $n$-element subsets of $\mathbb Q$ for some $n \ge 0$, termed the `rank' of the orbit. Thus we may
write $\Omega = \bigcup_{i \in I}\Omega_i$ for some index set $I$, where $\Omega_i = \{a^i_B: B \in [{\mathbb Q}]^{n_i}\}$
where $n_i$ is the rank of the orbit $\Omega_i$, and furthermore the action is `natural' in the sense that for each 
$g \in G$, $\theta(g)(a^i_B) = a^i_{gB}$. Next it is shown that this naturality extends to the action of $M$, and even to 
members of $E$ insofar as they act injectively on the relevant set $B$. For this it is fairly easy to check directly that 
$\theta$ is a homeomorphism.

Here for completeness we go over the similar arguments in reasonable detail in the first case treated, namely that of the 
betweenness relation on $\mathbb Q$. All the other cases follow a similar pattern, so in these we omit most of the details, 
concentrating on the aspects which are genuinely different.

\section{The betweenness relation on $\mathbb Q$}

We show that it is quite easy to `lift' the automatic homeomorphicity results for $({\mathbb Q}, <)$ and $({\mathbb Q}, \le)$
proved in \cite{Truss1} to the corresponding betweenness relation, essentially exploiting the fact that the original group 
has index 2 in the bigger one. The following basic lemma is needed.

\begin{lemma} \label{2.1} If $B_1$ and $B_2$ are finite subsets of $\mathbb Q$, and $G = {\rm Aut}({\mathbb Q}, \le)$, then 
$G_{B_1 \cap B_2} = \langle G_{B_1}, G_{B_2} \rangle$ (where these are the pointwise stabilizers).  \end{lemma}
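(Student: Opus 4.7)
The inclusion $\langle G_{B_1}, G_{B_2}\rangle \subseteq G_{B_1\cap B_2}$ is immediate since each $G_{B_i}$ lies in $G_{B_1\cap B_2}$, so the task is to prove the reverse. Setting $A = B_1 \cap B_2$, observe that any $f \in G_A$ preserves each open interval $I$ of ${\mathbb Q}\setminus A$ setwise, and factors as a finite commuting product $f = \prod_I f_I$, where $f_I$ agrees with $f$ on $I$ and is the identity outside. The same decomposition applies to elements of $G_{B_1}$ and $G_{B_2}$, so it is enough to prove the statement inside a single interval $I \cong ({\mathbb Q},\le)$, where $C := B_1 \cap I$ and $D := B_2 \cap I$ are finite and disjoint. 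The lemma thus reduces to
\[ {\rm Aut}({\mathbb Q},\le) = \langle G_C, G_D\rangle \quad \text{for finite disjoint } C,D\subseteq{\mathbb Q}. \qquad (\star) \]

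To prove $(\star)$, I iterate the same interval-splitting idea. If $|C|\ge 2$, pick $c\in C$, set $C'=C\setminus\{c\}$, and apply $(\star)$ inductively to $(C',D)$: this gives ${\rm Aut}({\mathbb Q},\le) = \langle G_{C'},G_D\rangle$, so it is enough to show $G_{C'}\subseteq\langle G_C,G_D\rangle$. Any $h\in G_{C'}$ factors as a commuting product over the intervals of $C'$; the pieces supported in intervals not containing $c$ already lie in $G_C$, while the piece supported in the interval $J\ni c$ is an arbitrary element of ${\rm Aut}(J,\le)$, which I must express using ${\rm Stab}_J(c)$ and ${\rm Stab}_J(D\cap J)$---exactly the $|C|=1$ subcase of $(\star)$ inside $J$. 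The symmetric induction on $|D|$ then reduces everything to the two-point base case $|C|=|D|=1$.

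For that base case, take $C=\{c\}$, $D=\{d\}$ with $c<d$ (without loss of generality). Given $f\in{\rm Aut}({\mathbb Q},\le)$, pick $p>c$ large enough that $q:=f(p)>c$; this is possible because $f$ is an order-preserving bijection and hence $f(x)\to\infty$ as $x\to\infty$. By ultrahomogeneity of $({\mathbb Q},\le)$ there exist $g_1,g_3\in G_{\{c\}}$ with $g_3(p)=d$ and $g_1(d)=q$, since in each case $c$ is fixed and the second point is sent to another point on the same side of $c$. Then $g_2:=g_1^{-1}fg_3^{-1}$ sends $d$ to $g_1^{-1}(f(p))=g_1^{-1}(q)=d$, so $g_2\in G_{\{d\}}$, and $f=g_1g_2g_3\in G_{\{c\}}G_{\{d\}}G_{\{c\}}\subseteq\langle G_{\{c\}},G_{\{d\}}\rangle$.

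The main obstacle is the bookkeeping in the interval-decomposition step at each level of the induction: verifying that local restrictions extended by the identity always land in the required stabilisers $G_{B_i}$, and that these local pieces do generate the global stabiliser. This is routine, as the points of $B_i$ outside a chosen interval are automatically fixed by any identity extension, but it is what carries the recursion. The substantive content is concentrated in the two-point base case, where a single appeal to ultrahomogeneity supplies the three-factor decomposition.
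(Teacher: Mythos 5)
Your proof is correct, but it takes a genuinely different route from the paper. You first split off the common part $A=B_1\cap B_2$ by decomposing any $g\in G_A$ as a commuting product of its restrictions to the finitely many intervals of ${\mathbb Q}\setminus A$ (extended by the identity), reducing to the disjoint statement $(\star)$, and then you run a nested induction on $|C|$ and $|D|$ whose entire content is concentrated in the two-singleton base case, where you exhibit the explicit factorization $f=g_1g_2g_3$ with $g_1,g_3\in G_{\{c\}}$, $g_2\in G_{\{d\}}$, i.e.\ ${\rm Aut}({\mathbb Q},\le)=G_{\{c\}}G_{\{d\}}G_{\{c\}}$. The paper instead argues globally, without any interval decomposition: given $g\in G_{B_1\cap B_2}$ it inducts on the number of points of $B_1\cup B_2$ moved by $g$, taking the greatest moved point $q$ (which lies in exactly one of $B_1,B_2$), and multiplying by a single $h$ that fixes $(B_1\cup B_2)\setminus\{q\}$ and agrees with $g$ at $q$; such $h$ lies in $G_{B_1}$ or $G_{B_2}$, and $h^{-1}g$ moves strictly fewer marked points. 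The paper's argument is shorter and handles the non-disjoint sets directly, each step being a one-element correction; yours involves more bookkeeping (two layers of interval splitting and a nested induction, whose well-foundedness is best phrased as induction on $|C|+|D|$ — worth making explicit), but in exchange it isolates an appealing stronger fact, the double-coset identity $G=G_cG_dG_c$ for point stabilizers, and in principle gives bounded-length words in the generators. Both proofs ultimately rest on the same homogeneity of $({\mathbb Q},\le)$, used to move one point to a prescribed position while fixing finitely many others on the appropriate side.
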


\begin{proof} The fact that $\langle G_{B_1}, G_{B_2} \rangle \le G_{B_1 \cap B_2}$ is immediate, so we concentrate on the
reverse inclusion. Let $g \in G_{B_1 \cap B_2}$, and we show by induction on $n = |\{q \in B_1 \cup B_2: g(q) \neq q\}|$ that 
$g \in \langle G_{B_1}, G_{B_2} \rangle$. If $n = 0$ then already $g \in G_{B_1}$. Otherwise choose the greatest 
$q \in B_1 \cup B_2$ moved by $g$, and assume that $g(q) > q$ (which may be arranged by passing to $g^{-1}$ if 
necessary). Thus all members of $(q, \infty) \cap (B_1 \cup B_2)$ are fixed by $g$. Then there is $h \in G$ which fixes all 
members of $B_1 \cup B_2 \setminus \{q\}$ and agrees with $g$ on $q$. Thus $h \in G_{B_1}$ or $G_{B_2}$ and 
$h^{-1}g \in G_q$. But $h^{-1}g$ also fixes all members of $B_1 \cup B_2$ which are fixed by $g$, so by induction hypothesis, 
lies in $\langle G_{B_1}, G_{B_2} \rangle$. Hence also $g \in \langle G_{B_1}, G_{B_2} \rangle$. 
\end{proof}

From this lemma we can deduce that if $H$ is a subgroup of $G$ which contains $G_B$ for some finite 
$B \subseteq {\mathbb Q}$, and $B$ is of minimal size such that this is true, then $H$ equals $G_B$. To see this, take any 
$h \in H$. Then $H = hHh^{-1} \ge hG_Bh^{-1} = G_{hB}$, so by the lemma, $H \ge G_{B \cap hB}$. By minimality of $B$, 
$hB = B$, so $h \in G_{\{B\}} = G_B$. In the other cases we consider the setwise and pointwise stabilizers may not be equal, 
so there are more possibilities for $H$, but it is still always the case that if $B$ is of least size such that $H \ge G_B$, 
then $G_B \le H \le G_{\{B\}}$ (which we verify in the individual cases).

In what follows we have occasion several times to use a key result from \cite{Bodirsky1} (their `Lemma 12'), which we quote here:

\begin{lemma} \label{2.2} Let $M$ be a closed submonoid of $O^{(1)}$ whose group of invertible elements $G$ is dense in $M$ and has automatic homeomorphicity. Assume that the only
injective endomorphism of $M$ that fixes every element of $G$ is the identity function
$id_M$ on $M$. Then $M$ has automatic homeomorphicity.  \end{lemma}

\begin{theorem} \label{2.3} ${\rm Emb}({\mathbb Q}, betw$) has automatic homeomorphicity.  \end{theorem}

\begin{proof} Writing $M$ and $G$ for the monoid of embeddings of $({\mathbb Q}, betw)$ to itself, and its automorphism group, 
respectively, we first note that $G$ is dense in $M$. This is because any order-reversing member of $M$ is the composition of 
an order-preserving member of $M$ (which can be arbitrarily well approximated by members of 
$G \cap {\rm Aut}({\mathbb Q}, \le)$) and the map $i$ sending $q$ to $-q$ for all $q$ which lies in $G$. So this means that 
we can appeal to Lemma \ref{2.2} as before, and focus on consideration of an injective endomorphism $\xi$ of $M$ 
which fixes $G$ pointwise.
 
We recall the set $\Gamma$ from \cite{Truss1}, which comprises those order-preserving embeddings $f$ of $\mathbb Q$ whose image arises from a copy of the 2-coloured rationals in which each point is replaced 
by an interval isomorphic to $\mathbb Q$ and each red interval contains exactly one point of the image of $f$ and each blue interval is disjoint from the image of $f$. In the proof of Lemma 2.2 in 
\cite{Truss1} we defined $S(g) = \{(\alpha, \beta) \in G_1^2: \alpha g = g \beta\}$ for any order-preserving embedding $g$, where $G_1$ is the subgroup of order-preserving permutations. We use the same 
notation, even if $g$ is order-reversing (though the group elements considered at this point have to be order-preserving). Now let $i$ be the involution above. Then $g$ is order-preserving if and only 
if $i g$ is order-reversing, and vice versa. We now find the connection between $S(g)$ and $S(i g)$: 
$S(i g) = \{(\alpha, \beta) \in G_1^2: \alpha i g = i g \beta\} = \{(\alpha, \beta) \in G_1^2: i\alpha i g = g\beta\} = \{(i\alpha i, \beta) \in G_1^2: \alpha g = g \beta\}$ (since as 
$\alpha$ ranges over $G_1$, so does $i\alpha i$) $= \{(i\alpha i, \beta)): (\alpha, \beta) \in S(g)\}$.

It was shown in \cite{Truss1} that for any order-preserving embedding $g \in \Gamma$, and rationals $u$ and $s$,
$g(u) = s$ if and only if $\forall(\alpha, \beta) \in S(g)(\beta(u) = u \to \alpha(s) = s)$.   We can now deduce the same equivalence in the case that $g'$ is an order-reversing embedding, of the form 
$g' = i g$ for $g \in \Gamma$.
For $g'(u) = s \Leftrightarrow g(u) = i(s) \Leftrightarrow \forall(\alpha, \beta) \in S(g)(\beta(u) = u \to \alpha(i(s)) = i(s)) \Leftrightarrow \forall(\alpha, \beta) \in S(g')(\beta(u) = u \to i\alpha i(i(s)) = i(s)) \Leftrightarrow \forall(\alpha, \beta) \in S(g')(\beta(u) = u \to \alpha(s) = s)$.
The argument given towards the end of the proof of Lemma 2.2 in \cite{Truss1} shows that $S(\xi(g)) = S(g)$, and we can therefore deduce that $\xi(g) = g$ precisely as in the previous case. 
Since $\xi$ fixes all members of $\Gamma$, by our earlier result, it also fixes the order-preserving members of $M$ pointwise, and multiplying by $i$ (which is known to be fixed by $\xi$), it follows that it 
also fixes all of $M$ pointwise.

In fact it is also necessary to consider members of associated classes $\Gamma^+$, $\Gamma^-$, and $\Gamma^\pm$---see \cite{Truss1} for the precise definitions, but the details are similar and are omitted 
here.

The remaining point to check to be able to appeal to Lemma \ref{2.2}, is to verify the small index property for Aut$({\mathbb Q}, betw)$, but this follows easily from the fact that it holds for 
Aut$({\mathbb Q}, \le)$, and Aut$({\mathbb Q}, \le)$ has index 2 in Aut$({\mathbb Q}, betw)$.    \end{proof} 

Now we move on to consider the reflexive relation, in other words, the monoid $E =$ End$({\mathbb Q}, betw)$. As in 
\cite{Truss1} we are given an isomorphism $\theta$ from $E$ to a closed submonoid of $Tr(\Omega)$ for some countable set 
$\Omega$, and our task is to show that it is a homeomorphism. We may partition $\Omega$ into orbits under $G$. The 
identification of each orbit $X$ with a set of the form $[{\mathbb Q}]^n$ for some $n$ is however not quite so 
straightforward, and this is because although by the small index property, the stabilizer $G_x$ of a member $x$ of $X$ 
{\em contains} the pointwise stabilizer $G_B$ of a finite $B \subseteq {\mathbb Q}$ of minimal size, it may not {\em equal} 
it, since it may contain only order-preserving permutations, or alternatively order-preserving and reversing ones. The upshot 
of this is that $X$ may either be identified with some $[{\mathbb Q}]^n$ or the set of increasing or decreasing orderings of
$n$-element subsets of $\mathbb Q$ under the natural action.

The easiest method to reach this conclusion is to note that $G_x$ equals either the setwise or pointwise stabilizer of $B$ in 
$G$, $G_{\{B\}}$ or $G_B$. This may be deduced from the order-preserving case by considering 
$G_x \cap {\rm Aut}({\mathbb Q}, \le)$, which has index either 1 or 2 in $G_x$. In each case, the orbit $X$ containing $x$ is 
equal to $\{\theta(g)(x): g \in G\}$, but the indexing to identify it with $[{\mathbb Q}]^n$ or the set of increasing or 
decreasing orderings of $n$-element subsets of $\mathbb Q$ is a little different in the two cases.

We first treat the case that $G_x = G_{\{B\}}$. For each $g \in G$, we write $a_{gB} = \theta(g)(x)$. To justify this 
notation we note that for all $g_1, g_2 \in G$, 
$g_1B = g_2B \Leftrightarrow g_2^{-1}g_1 \in G_{\{B\}} \Leftrightarrow g_2^{-1}g_1 \in G_x \Leftrightarrow \theta(g_1)(x) = \theta(g_2)(x)$. This shows that the orbit $X$ can be precisely identified with $\{a_{gB}: g \in G\}$, and as $G$ acts 
transitively on the family of $n$-element subsets of $\mathbb Q$, also with $[{\mathbb Q}]^n$. More to the point, this 
identification also corresponds to the natural (left) action of $G$, since 
$\theta(f)(a_{gB}) = \theta(f)\theta(g)(x) = \theta(fg)(x) = a_{fgB}$.

In the second case, $G_x = G_B$. This time we write $a_{g{\underline B}} = \theta(g)(x)$ for each $g$, where $\underline B$
is $B$ together with its ordering (as a subset of $\mathbb Q$). Note that $g{\underline B}$ is ordered in the `correct' way 
(as a subset of $\mathbb Q$) if $g \in {\rm Aut}({\mathbb Q}, \le)$, and with the reverse ordering otherwise. This time the 
calculation justifying the notation is as follows:

$g_1{\underline B} = g_2{\underline B} \Leftrightarrow g_2^{-1}g_1 \in G_B \Leftrightarrow g_2^{-1}g_1 \in G_x \Leftrightarrow \theta(g_1)(x) = \theta(g_2)(x)$.

So there are two orbits of $X$ under the action of Aut$({\mathbb Q}, \le)$, namely those in which the ordering of 
$g{\underline B}$ agrees with that of $\mathbb Q$, or disagrees with it. Once more, the notation is respected by the left 
action of $G$ since $\theta(f)(a_{g{\underline B}}) = \theta(f)\theta(g)(x) = \theta(fg)(x) = a_{fg{\underline B}}$.

In the special case $|B| = 0$ or 1, $G_B = G_{\{B\}}$, and we choose the `first' case. In each case we say that $X$ 
is an orbit of `rank' $n$. 

This discussion enables us to write $\Omega = \bigcup_{i \in I}\Omega_i$ where $\Omega_i$ are the orbits, containing $x_i$ 
say, and associated with each $i \in I$ we have the rank $n_i$ of $\Omega_i$, and $t_i = 0$ or 1, its `type', being 0 if 
$G_{x_i} = G_{\{B\}}$ for some $B$, and 1 if $G_{x_i} = G_{B}$ for some $B$. We also write 
$\Omega_i = \{a^i_B: B \in [{\mathbb Q}]^{n_i}\}$ in the first case, and 
$\Omega_i = \{a^i_{\underline B}: B \in [{\mathbb Q}]^{n_i}, {\underline B} \mbox{ the increasing or decreasing ordering of } B\}$ in the second.

The first step in establishing automatic homeomorphicity is to use the result already proved for $M$. For that it is 
necessary to know that the image of $M$ under $\theta$ is a {\em closed} submonoid of $Tr(\Omega)$. This is provided by the 
following key result from \cite{Truss1} (Lemma 4.1):

If $\theta: E \to E'$ is an isomorphism where $E'$ is a closed submonoid of the full transformation monoid $Tr(\Omega)$ on a 
countable set $\Omega$, then the image $M'$ of $M$ is closed in $Tr(\Omega)$ and the restriction 
$\theta\restriction_M: M\to M'$ is a homeomorphism.

We do not need to reprove this here, since $G$ also has automatic homeomorphicity and is dense in $M$. Applying this here, we 
may appeal to automatic homeomorphicity of $M$ to deduce that on $M$, $\theta$ is a homeomorphism. 

\begin{lemma} \label{2.4} For each $f \in M$ and $i \in I$, if $t_i = 0$ then for each $B \in [{\mathbb Q}]^{n_i}$,
$\theta(f)(a^i_B) = a^i_{fB}$, and if $t_i = 1$ then for each $B \in [{\mathbb Q}]^{n_i}$ and increasing or decreasing 
ordering $\underline B$ of $B$, $\theta(f)(a^i_{\underline B}) = a^i_{f{\underline B}}$.   \end{lemma}

\begin{proof} This is proved by a continuity argument. Since $G$ is dense in $M$, there is a sequence $(g_n)$ of members of 
$G$ having $f$ as limit. We treat just the case $t_i = 0$ (and $t_i = 1$ is done similarly). Let 
$B = \{q_1, \ldots, q_{n_i}\}$ and $r_k = f(q_k)$. Then for each $k$ $f$ lies in the basic open set 
$\mathcal{B}_{qr} = \{h \in M: h(q) = r\}$, and as $g_n \to f$ there is $N_k$ such that 
$(\forall n \ge N_k)g_n \in \mathcal{B}_{q_kr_k}$. Hence if $n \ge \max_{1 \le k \le n_i}N_k$, 
$g_n \in \bigcap_{1 \le k \le n_i}{\cal B}_{q_kr_k}$, so $g_n(B) = f(B)$. 

As remarked above, the restriction of $\theta$ to $M$ is continuous. Hence $\theta(g_n) \to \theta(f)$. Let 
$\theta(f)(a^i_B) = a^j_C$. Thus $\theta(f) \in \mathcal{C}_{ijBC}$. From $\theta(g_n) \to \theta(f)$ it follows that
$(\exists N)(\forall n \ge N) \theta(g_n) \in \mathcal{C}_{ijBC}$. Hence for this $N$,
$(\forall n \ge N)\theta(g_n)(a^i_B) = a^j_C$. But we know that $\theta(g_n)(a^i_B) = a^i_{g_n(B)}$ as $g_n \in G$. Hence for
such $n$, $j = i$ and $g_n(B) = C$. Taking $n \ge N, \max_{1 \le k \le n_i}N_k$, it follows that $j = i$ and 
$C = g_n(B) = f(B)$. Thus $\theta(f)(a^i_B) = a^i_{f(B)}$ as required.  \end{proof} 

We extend this even to some actions of members of $E$.

\begin{lemma}\label{2.5} If $f \in E$, $i \in I$, if $t_i = 0$, and $a^i_B \in \Omega_i$, where $|f(B)| = n_i = |B|$, then
$\theta(f)(a^i_B) = a^i_{f(B)}$, and if $t_i = 1$, and $a^i_{\underline B} \in \Omega_i$, where $\underline B$ is the increasing or decreasing ordering of $B$, and $|f(B)| = n_i = |B|$, then 
$\theta(f)(a^i_{\underline B}) = a^i_{f({\underline B})}$.
\end{lemma}

\begin{proof} It is easiest to appeal to the methods of \cite{Truss1} by composing, in the order-reversing case, with an 
order-reversing automorphism of $\mathbb Q$ which fixes $B$ (setwise). First then suppose that $f$ is order-preserving and surjective. Then there is an order-preserving $h \in E$ such that for each $q$, $fh(q) = q$, obtained by choosing 
$h(q) \in f^{-1}(q)$, and such $h$ is necessarily injective, so lies in $M$. Furthermore, if $q \in B$ we let $hf(q) = q$ 
(possible since $f$ is 1--1 on $B$). By appealing to Lemma \ref{2.4}, 
$\theta(f)(a^i_B) = \theta(f)(a^i_{hfB}) = \theta(f)\theta(h)(a^i_{fB}) = \theta(fh)(a^i_{fB}) = a^i_{fB}$, in the case 
$t_i = 0$, with a similar argument if $t_i = 1$.

Next if $f$ is order-preserving but not necessarily surjective, as shown in \cite{Truss1} Lemma 3.3, we may write 
$f = f_1f_2$ where $f_1 \in S$ and $f_2 \in M$ are order-preserving. Then $|f_1f_2B| = |B| = |f_2B|$. Hence by the 
surjective case just done, $\theta(f_1)(a^i_{f_2B}) = a^i_{f_1f_2B} = a^i_{fB}$, so by Lemma \ref{2.4} again, 
$\theta(f)(a^i_B) = \theta(f_1)\theta(f_2)(a^i_B) = \theta(f_1)(a^i_{f_2B}) = a^i_{fB}$.

Finally, suppose that $f$ is order-reversing. Then $fj$ is order-preserving where $j$ is an (order-reversing) involution 
fixing $B$ setwise, so $\theta(f)(a^i_B) = \theta(fj)\theta(j)(a^i_B) = \theta(fj)a^i_{jB} = a^i_{fj^2B} = a^i_{fB}$.
\end{proof}

If $f \in E$ `collapses' a set~$B$, then we can certainly not deduce that $\theta(f)(a^i_B) = a^j_C$ for $j = i$,
since~$\Omega_i$ and~$\Omega_j$ will have different ranks. If $B \neq \emptyset$, in \cite{Truss1} was shown that there 
is an idempotent order-preserving endomorphism $h$ whose image is $B$, from which it follows by Lemma \ref{2.5} that 
$\theta(h)(a^i_B) = a^i_B$, and this is also valid, even if the subscript is ordered. 

\begin{lemma}\label{2.6} Let $i \in I$ and $B \in [{\mathbb Q}]^{n_i}$, $B \neq \emptyset$. If $t_i = 0$ and  
$a^i_B \in \Omega_i$, then there is an idempotent order-preserving endomorphism $h\in E$ having~$B$ as image such that 
$\theta(h)(a^i_B) = a^i_B$, and if $t_i = 1$, and $a^i_{\underline B} \in \Omega_i$, where $\underline B$ is the increasing 
or decreasing ordering of $B$, then there is an idempotent order-preserving endomorphism $h\in E$ having~$B$ as image such 
that $\theta(h)(a^i_{\underline B}) = a^i_{\underline B}$.  \end{lemma}

\begin{proof} The case when $t_i = 0$ was considered in Lemma 4.4 of \cite{Truss1}. If $t_i = 1$, then similarly to Lemma 4.4 
of \cite{Truss1} we obtain an idempotent endomorphism $h \in E$ fixing all elements of $B$ and satisfying 
${\rm{im}}(h) = B$ by subdividing $\mathbb Q$ into $|B|$ pairwise disjoint intervals, each containing a single member of $B$, 
and mapping the whole of each such interval to the member of~$B$ it contains. Since $h({\underline B}) = {\underline B}$
where $B \in [\mathbb{Q}]^{n_i}$, we can apply Lemma~\ref{2.5} to get 
$\theta(h)(a^i_{\underline B}) = a^i_{h({\underline B})} = a^i_{\underline B}$.    \end{proof}

For the proof of openness in the main theorem, we still need some information about~$C$, namely that it is contained in~$f(B)$.

\begin{lemma}\label{2.7} If $f \in E$, $i, j \in I$, and $B \in [{\mathbb Q}]^{n_i}$, $C \in [{\mathbb Q}]^{n_j}$ are such 
that $\theta(f)(a^i_B)=a^j_C$ (possibly with orderings on the subscripts, depending on the values of $t_i$, $t_j$), then 
$C \subseteq fB$. \end{lemma}
\begin{proof} By Lemma \ref{2.6}, if $B \neq \emptyset$ there is an idempotent order-preserving endomorphism $h$
whose image is $B$, from which it follows by Lemma \ref{2.5} that $\theta(h)(a^i_B) = a^i_B$, and this applies here too, even 
if the subscript is ordered. From this it follows that if $f_1, f_2 \in E$ are order-preserving, and they agree on their 
actions on $B$, then $\theta(f_1)(a^i_B) = \theta(f_2)(a^i_B)$ since $f_1h = f_2h$, so that 
$\theta(f_1)(a^i_B) = \theta(f_1)(a^i_{hB}) = \theta(f_1)\theta(h)(a^i_B) = \theta(f_1h)(a^i_B) = \theta(f_2h)(a^i_B) = \theta(f_2)(a^i_B)$. This even applies if $B = \emptyset$, since then by Lemma \ref{2.5}, $\theta(f_1)(a^i_B)$ and 
$\theta(f_2)(a^i_B)$ are both equal to $a^i_B$. If now $C \not \subseteq f(B)$, choose $q \in C \setminus f(B)$ and let 
$h \in G$ be order-preserving taking $q$ to $h(q) \not \in C$ but fixing all members of $f(B)$. Then $f$ and $hf$ agree on 
$B$, so $a^j_C = \theta(f)(a^i_B) = \theta(hf)(a^i_B) = \theta(h)\theta(f)(a^i_B) = \theta(h)(a^j_C)$, contrary to 
$hC \neq C$, and giving the result.

Finally the result for order-reversing $f$ may be deduced by composing with an order-reversing member of $G_{\{B\}}$ as in 
the proof of Lemma \ref{2.5}. 

\end{proof}

Using the ideas from above, we can demonstrate automatic homeomorphicity of $E = {\rm End}(\mathbb{Q}, betw)$.

\begin{theorem}\label{2.8} $E = {\rm End}(\mathbb{Q}, betw)$ has automatic homeomorphicity. \end{theorem}
\begin{proof} The sub-basic open sets in $E$ and $E'$ are of the form ${\cal B}_{qr} = \{f \in E: f(q) = r\}$ and 
${\cal C}_{ijBC} = \{f \in E': f(a^i_B) = a^j_C\}$ or variants of these with orderings on the subscripts $B, C$, so to
establish continuity we have to show that each $\theta^{-1}({\cal C}_{ijBC})$, $\theta^{-1}({\cal C}_{ij{\underline B}C})$
etc is open in $E$. We concentrate on the case of ${\cal C}_{ijBC}$, the others being easy modifications of the same 
argument. Let $B = \{q_1, \ldots, q_m\}$ and if $f \in \theta^{-1}({\cal C}_{ijBC})$ is order-preserving let $r_k = f(q_k)$, 
so that $f \in \bigcap_{k = 1}^m{\cal B}_{q_kr_k}$. We show that this open set is contained in 
$\theta^{-1}({\cal C}_{ijBC})$, which suffices. Let $f' \in \bigcap_{k = 1}^m{\cal B}_{q_kr_k}$. Then $f$ and $f'$ agree on 
$B$. If $f'$ is also order-preserving, by the proof of Lemma \ref{2.7}, $\theta(f')(a^i_B) = \theta(f)(a^i_B) = a^j_C$, 
which gives $f' \in \theta^{-1}({\cal C}_{ijBC})$. If $f'$ is not order-preserving, then as $f$ and $f'$ agree on $B$, we 
must have $m \le 1$, giving $t_i = 0$. Let $g \in G_B$ be an involution. Then $\theta(g)(a^i_B) = a^i_{gB} = a^i_B$, and $f$ 
and $f'g$ are order-preserving, and agree on $B$, so by what we have just shown, 
$\theta(f')(a^i_B) = \theta(f'g^2)(a^i_B) = \theta(f'g)\theta(g)(a^i_B) = \theta(f'g)(a^i_B) = \theta(f)(a^i_B) = a^j_C$, 
again giving $f' \in \theta^{-1}({\cal C}_{ijBC})$. If $f$ is order-reversing, the result is established by composing with an 
order-reversing automorphism in $G_{\{B\}}$ as before.

Next we have to show that $\theta$ is open, so we consider the image $\theta({\cal B}_{qr})$ of any 
sub-basic open set and show that this is open. Let $\theta(f) \in \theta({\cal B}_{qr})$ where $f(q) = r$, and we find 
$i, j \in I$ and $B, C \subset {\mathbb Q}$ so that $\theta(f) \in {\cal C}_{ijBC} \subseteq \theta({\cal B}_{qr})$
(possibly with $B$ and/or $C$ ordered), or in one case, $B_1, B_2, C_1, C_2 \subset {\mathbb Q}$ so that 
$\theta(f) \in {\cal C}_{ijB_1C_1} \cap {\cal C}_{ijB_2C_2} \subseteq \theta({\cal B}_{qr})$. 

The first step is to show that $|{\rm im}(f)| \ge n_i > 0$ for some $i \in I$. For if not, for every 
$i \in I$, $n_i > 0 \Rightarrow |{\rm im}(f)| < n_i$. For any $a^i_B$ in $\Omega_i$, let $\theta(f)(a^i_B) = a^j_C$ (where 
the subscripts may be ordered). By Lemma \ref{2.7}, $C \subseteq f(B)$, so $n_j \le |{\rm im}(f)|$, giving $n_j = 0$ and 
$C = \emptyset$. Let $g(q) = q + 1$, so $g \in G$. Then 
$\theta(gf)(a^i_B) = \theta(g)\theta(f)(a^i_B) = \theta(g)(a^j_\emptyset) = a^j_\emptyset = \theta(f)(a^i_B)$, so that 
$\theta(gf) = \theta(f)$. By injectivity of $\theta$, $gf = f$, contradiction.

Consider the case $t_i = 1$, and since $|{\rm im}(f)| \ge n_i$ we may choose $B$ and $C$ both of size $n_i$ such that 
$f({\underline B}) = {\underline C}$ and $q \in B$. Then by Lemma \ref{2.5}, 
$\theta(f)(a^i_{\underline B}) = a^i_{f({\underline B})} = a^i_{\underline C}$, showing that 
$\theta(f) \in {\cal C}_{ii{\underline B}\,{\underline C}}$. Now consider any member of 
${\cal C}_{ii{\underline B}\,{\underline C}}$. Since we are working in $E'$ which is the image of $E$, this has the form 
$\theta(h)$ for some $h \in E$ and $\theta(h)(a^i_{\underline B}) = a^i_{\underline C}$. By Lemma \ref{2.7}, 
$C \subseteq h(B)$, so as $|B| = |C|$, $h({\underline B}) = {\underline C}$. Since $f$ maps $q$ in $\underline B$ to the 
corresponding entry of $\underline C$, it follows that $h$ does too, and hence $h(q) = r$, which shows that 
$\theta(h) \in \theta({\cal B}_{qr})$ as required.

Now look at the case $t_i = 0$, and suppose first that for some $i$, $n_i < |{\rm im} f|$, and $f$ is order-preserving. Choose 
$B, C \in [{\mathbb Q}]^{n_i + 1}$ such that $f(B) = C$ and $q \in B$. Let $B = \{q_1, q_2, \ldots, q_{n_i+ 1}\}$ in 
increasing order, and similarly for $C = \{r_1, r_2, \ldots, r_{n_i + 1}\}$, so that $f(q_k) = r_k$ for each $k$. Let 
$B_1 = \{q_1, q_2, \ldots, q_{n_i}\}$, $B_2 = \{q_2, q_3, \ldots, q_{n_i + 1}\}$, and similarly for $C_1$, $C_2$. Then 
$\theta(f) \in {\cal C}_{iiB_1C_1} \cap {\cal C}_{iiB_2C_2}$. We show that 
${\cal C}_{iiB_1C_1} \cap {\cal C}_{iiB_2C_2} \subseteq \theta({\cal B}_{qr})$. For this, take any member $\theta(h)$ of  
${\cal C}_{iiB_1C_1} \cap {\cal C}_{iiB_2C_2}$. Then by the above calculations, $h(B_1) = C_1$ and $h(B_2) = C_2$. If $h$
is order-reversing, the first equation implies that $h(q_2) = r_{n_i - 1}$ and the second that $h(q_2) = r_{n_i + 1}$. This
contradiction shows that actually $h$ is order-preserving, and as before it follows that $h(q) = r$, and hence 
$\theta(h) \in \theta({\cal B}_{qr})$. A similar proof applies if $f$ is order-reversing.

This reduces us to the case in which for every $i$, $n_i = |{\rm im}(f)|$ or 0, and we suppose $f$ order-preserving, with a 
similar argument in the order-reversing case. Then if ${\rm im}(f) = C$, for any 
$B' \in [{\mathbb Q}]^{n_i}$ on which $f$ is 1--1, $\theta(f)(a^i_{B'}) = a^i_C$, and so if $\theta(h) \in {\cal C}_{iiBC}$, 
for some such $B$, also $\theta(h)(a^i_{B'}) = a^i_C$ for every $B'$ on which $h$ is 1--1. If $h$ is order-preserving, then we
argue as before, so suppose that $h$ is order-reversing. We show that $\theta(f) = \theta(h)$. Let 
$B = \{q_1, q_2, \ldots, q_{n_i}\}$, listed in increasing order, and let $q_k', q_k'' \in (q_k, q_{k+1})$ be such that
$f(-\infty, q_1') = h(q_{{n_i} - 1}'', \infty) = \{r_1\}$, $f(q_1', q_2') = h(q_{{n_i} - 2}'', q_{{n_1} - 1}'') = \{r_2\}$,
$\ldots, f(q_{{n_i} - 1}', \infty) = h(-\infty, q_1'') = \{r_{n_i}\}$. Let $g \in G_{\{B\}}$ be order-reversing taking each
$q_k$ to $q_{n_i + 1 - k}$ and $q_k'$ to $q_{n_i - k}''$. Then by considering the action on each interval $(-\infty, q_1')$, 
$(q_1', q_2'), \ldots, (q_{n_i - 1}', \infty)$ we see that $f = hg$, and hence 
$\theta(f)(a^i_B) = \theta(hg)(a^i_B) = \theta(h)(a^i_{gB}) = \theta(h)(a^i_B)$. If $n_i = 0$ then we get the same equality 
immediately from Lemma \ref{2.5}. It follows that $\theta(f) = \theta(h)$, but as $f$ is order-preserving and $h$ is 
order-reversing, this is contrary to injectivity of $\theta$. \end{proof}

\section{The circular ordering relation on $\mathbb Q$}

The (strict) circular order on $\mathbb Q$ is a ternary relation which may be defined by $circ(x, y, z)$ if $x < y < z$ or 
$y < z < x$ or $z < x < y$. In this section we demonstrate automatic homeomorphicity for its monoid of embeddings. We adapt 
the techniques from \cite{Truss1} section 2, already mentioned above when considering the betweenness relation. There we defined families $\Gamma$, $\Gamma^+$, $\Gamma^-$, $\Gamma^\pm$ of 
embeddings. For these we used the `2-coloured version of the rationals' ${\mathbb Q}_2$, which is taken to be the ordered 
rationals together with  colouring by 2 colours such that each colour occurs densely. Analogously we may form the {\em 
2-coloured version $C_2$} of $({\mathbb Q}, circ)$, which is taken to be $\mathbb Q$ under the same (circular) relation, 
coloured by two colours, `red' and `blue', each of which occurs densely. This again exists and is unique up to isomorphism. 
Note that for any $x, y \in {\mathbb Q}$, we may form the closed interval $[x, y] = \{z: circ(x, z, y)\}$, even if $y < x$ 
(in which case it actually equals $[x, \infty) \cup (-\infty, y]$ for `usual' intervals). 

This leads us to the analogue of the class $\Gamma$ in this case (since there are no endpoints, 
$\Gamma^+, \Gamma^-, \Gamma^\pm$ are not needed). For any embedding $f$ of $({\mathbb Q}, circ)$, we define $\sim$ by 
$x \sim y$ if $[x, y]$ or $[y, x]$ contains at most one point of the image of $f$. Each $\sim$-class is then an interval 
containing at most one point of ${\rm im} f$; if one point, then the interval is {\em red}; if no point, then it is 
{\em blue}. Then $\Gamma$ is taken to be the set of all members $f$ of $M$ all of whose $\sim$-classes are non-empty open 
intervals, and the red and blue classes form a copy of $C_2$.

For any $g \in M$ we let $\sim$ be the equivalence relation defined above, and we let $P$ be the family of all pairs $(a, b)$ 
of finite partial automorphisms of $\mathbb Q$ satisfying the following properties:

(1) $a$ is colour-preserving, and $\sim$-preserving (meaning that for 
$x, y \in {\rm dom}(a)$, $x \sim y \Leftrightarrow a(x) \sim a(y)$),

(2) if $x \in {\rm dom}(a)$ lies in a red interval containing a point $y$ of im$(g)$, then $y \in {\rm dom}(a)$,
 
(3) if $x \in {\rm im}(a)$ lies in a red interval containing a point $y$ of im$(g)$, then $y \in {\rm im}(a)$,

(4) if $x \in {\rm dom}(b)$, then $g(x) \in {\rm dom}(a)$, and $gb(x) = ag(x)$,

(5) if $x \in {\rm im}(b)$, then $g(x) \in {\rm im}(a)$, and $a^{-1}g(x) = gb^{-1}(x)$,
  
(6) if $x \in {\rm im}(g) \cap {\rm dom}(a)$, then $g^{-1}(x) \in {\rm dom}(b)$, and $gbg^{-1}(x) = a(x)$

(7) if $x \in {\rm im}(g) \cap {\rm im}(a)$, then $g^{-1}(x) \in {\rm im}(b)$, and $a^{-1}(x) \in {\rm im}(g)$. Moreover,  
$b^{-1}g^{-1}(x) = g^{-1}a^{-1}(x)$.

\begin{lemma} \label{3.1} If $f \in \Gamma$, then any $(a,b) \in P$ can be extended to a pair of automorphisms 
$(\alpha, \beta)$ of $({\mathbb Q}, circ)$ such that $\alpha f = f \beta$.   \end{lemma}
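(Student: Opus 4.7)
The plan is to build $\alpha$ and $\beta$ simultaneously by a back-and-forth argument, producing a chain $(a,b) = (a_0,b_0) \subseteq (a_1,b_1) \subseteq \ldots$ in $P$ whose unions $\alpha$ and $\beta$ are total automorphisms of $({\mathbb Q}, circ)$. Conditions (1)--(7) pass to chain unions, and (4) at the limit says exactly $\alpha f = f\beta$, so the whole question reduces to a single one-step extension lemma: given $(a,b) \in P$ and $x \in {\mathbb Q}$, one can extend $(a,b)$ inside $P$ so that $x$ lies in any prescribed one of ${\rm dom}(a')$, ${\rm im}(a')$, ${\rm dom}(b')$, ${\rm im}(b')$.

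I will focus on the representative case of adding $x$ to ${\rm dom}(b')$. If $x \in {\rm dom}(b)$ already, nothing to do. Otherwise, writing $y := g(x)$, condition (6) shows $y \notin {\rm dom}(a)$ (else (6) would already put $x$ in ${\rm dom}(b)$); moreover, because $f \in \Gamma$, the point $y$ is the unique point of ${\rm im}(g)$ in its red $\sim$-class $[y]$, and a direct check using (3) and (7) shows that in fact no point of $[y]$ already lies in ${\rm dom}(a)$: if $y' \in [y] \cap {\rm dom}(a)$, then (3) applied to the red class $[a(y')]$ places its ${\rm im}(g)$-point $z'$ in ${\rm im}(a)$, and then (7) places $a^{-1}(z')$ in ${\rm im}(g) \cap [y'] = \{y\}$, contradicting $y \notin {\rm dom}(a)$. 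We therefore have to add a fresh red $\sim$-class to the induced finite partial automorphism $\bar a$ of the quotient $C_2 = {\mathbb Q}/\!\sim$. Since $C_2$ is homogeneous as a 2-coloured circular order, we can extend $\bar a$ to send $[y]$ to some red class $[z]$ compatible with the circular order of the already-touched classes. Let $z$ be the unique ${\rm im}(g)$-point of $[z]$, define $a'(y) = z$, and set $b'(x) = g^{-1}(z)$.

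To check $(a',b') \in P$: condition (1) holds because $\bar a$ is colour- and $\sim$-preserving and no within-class action is altered; (2) and (3) hold at the new points $y$ and $z$ because each is already the ${\rm im}(g)$-point of its red class; and (4)--(7) are verified by direct substitution, the key identity being $gb'(x) = z = a'(y) = a'g(x)$, with $a'(y) = z \in {\rm im}(g)$ supplying the compatibility required by (7). The other three target slots ${\rm im}(b')$, ${\rm dom}(a')$, ${\rm im}(a')$ are handled by symmetric arguments, using the same two homogeneity ingredients: homogeneity of $C_2$ for extending $\bar a$ across a fresh $\sim$-class, and homogeneity of $({\mathbb Q},<)$ restricted to an open interval for extending $a$ within an already-touched class (needed when $x$ lies in a blue class, or is a non-${\rm im}(g)$ point in a red class already represented in $\bar a$).

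The main obstacle I anticipate is keeping the case analysis clean, in particular tracking which operations on $a$ force corresponding operations on $b$ (and vice versa) and verifying that each forcing terminates after one round so that $(a',b')$ remains finite. Although conditions (4)--(7) look asymmetric, the analysis above shows that they amount to the single bijective correspondence $b = g^{-1} a g$ on $g^{-1}({\rm dom}(a) \cap {\rm im}(g))$; once this is noted, each condition-preserving extension of $a$ determines at most one corresponding extension of $b$ (and symmetrically), so no infinite regress occurs and the one-step extension is genuinely finite.
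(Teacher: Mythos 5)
Your proposal is correct, but it is organised differently from the paper's argument. The paper proves the lemma in one shot: since $f \in \Gamma$, the $\sim$-classes $A_q$ form a copy of $C_2$, conditions (1)--(3) make the induced map $\overline a$ a finite partial automorphism of $C_2$, which by homogeneity of $C_2$ extends to a full automorphism $\overline\alpha$; one then lifts $\overline\alpha$ to an automorphism $\alpha$ of $({\mathbb Q}, circ)$ extending $a$, mapping each $A_q$ onto $A_{\overline\alpha(q)}$ and preserving ${\rm im}(f)$ setwise, and simply defines $\beta := f^{-1}\alpha f$; that $\beta$ is total and extends $b$ is exactly what the setwise preservation of ${\rm im}(f)$ and conditions (4)--(7) deliver. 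You instead run the back-and-forth at the level of the pairs in $P$ themselves, proving a one-step extension lemma and taking a union; your closing observation that (4)--(7) just say $b = f^{-1}af$ on $f^{-1}({\rm dom}(a)\cap{\rm im}(f))$ is precisely the reason the paper can define $\beta$ by conjugation without any case analysis. What the paper's route buys is brevity: the back-and-forth is outsourced to homogeneity of $C_2$ (class level) and of the open-interval classes (within-class level), and the forcing bookkeeping disappears. What your route buys is an explicit verification that $P$ is closed under one-point extensions, which the paper leaves implicit in the word ``extending''. Two small points on your write-up: the freshness of the class $[f(x)]$ follows immediately from condition (2) (your detour through (3) and (7) is valid but unnecessary); and ``terminates after one round'' needs the mild caveat that adding a non-image point of a fresh red class to ${\rm dom}(a)$ forces first the class's ${\rm im}(f)$-point (via (2)) and then one $b$-point (via (6)) --- still finite, and the remaining three extension tasks, which you rightly include so that the limit maps are surjective, go through as you indicate.
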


\begin{proof} Let ${\mathbb Q} = \bigcup\{A_q: q \in C_2\}$ where each $A_q$ is an open interval, circularly ordered by the 
natural relation determined from that on $C_2$, and where $|A_q \cap {\rm im} f| = 1$ if $q$ is red, and 
$A_q \cap {\rm im} f = \emptyset$ if $q$ is blue. Let ${\overline a}(q) = r$ if there is $x \in A_q \cap {\rm dom}(a)$ such 
that $a(x) \in A_r$. Then $\overline a$ is a finite colour and (circular-)order preserving partial automorphism of $C_2$. 
Extend $\overline a$ to an automorphism $\overline \alpha$ of $C_2$, and let $\alpha$ be an automorphism extending $a$ preserving im$(f)$ such that for each $q \in C_2$, $\alpha(A_q) = A_{{\overline \alpha}(q)}$. Let $\beta = f^{-1}\alpha f$.
\end{proof} 
 
\begin{lemma} \label{3.2} Any injective endomorphism $\xi$ of $M$ which fixes $G$ pointwise also fixes every member of 
$\Gamma$. \end{lemma}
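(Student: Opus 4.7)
The plan is to adapt the argument from \cite{Truss1} to the circular setting. Let $f \in \Gamma$ and set $f' = \xi(f)$; the aim is to show $f' = f$. Since $\xi$ is a monoid homomorphism fixing $G$ pointwise, every identity $\alpha f = f\beta$ with $\alpha, \beta \in G$ transfers to $\alpha f' = f'\beta$. By Lemma \ref{3.1}, each $(a, b) \in P$ (with $g = f$) extends to such a pair of automorphisms, so $f'$ is constrained by $\alpha f' = f'\beta$ for a rich supply of intertwiners.

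Fix an arbitrary $q \in \mathbb{Q}$, set $p = f(q)$, and I would show $f'(q) = p$. Restrict attention to those $(a, b) \in P$ with $(q, q) \in b$. Any extension $\beta \supseteq b$ fixes $q$, so evaluating $\alpha f' = f'\beta$ at $q$ yields $\alpha(f'(q)) = f'(q)$: hence $f'(q)$ is a common fixed point of every such $\alpha$. Including $(p, p)$ in $a$ is consistent with condition~(6), since $fbf^{-1}(p) = fb(q) = f(q) = p = a(p)$. The remaining task is to show $p$ is the \emph{only} point fixed by every such $\alpha$.

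For this, given any $r \neq p$, I would construct a pair $(a, b) \in P$ with $\{(q, q)\} \subseteq b$ and $\{(p, p), (r, r')\} \subseteq a$ for some $r' \neq r$; extending via Lemma \ref{3.1} then gives an $\alpha$ moving $r$, contradicting $r$ being a common fixed point. The construction proceeds by a finite back-and-forth: adding $r$ to $\mathrm{dom}(a)$ may, under (2), force the addition of the unique image point of the red class containing $r$ (if any), and each subsequent addition to $\mathrm{dom}(a)$ or $\mathrm{im}(a)$ may, under (3)--(7), force further additions to $a$ or $b$. The density of both colours in $C_2$ and the fact (since $f \in \Gamma$) that each red $\sim$-class contains exactly one point of $\mathrm{im}(f)$ provide enough room to terminate with a finite consistent pair. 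Concluding $f'(q) = p$ for every $q \in \mathbb{Q}$ gives $f' = f$.

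The principal obstacle is the combinatorial verification that a finite partial map moving the prescribed point $r \neq p$ can always be closed off into a member of $P$ respecting all seven coupling conditions; this is the analogue for the circular setting of the back-and-forth appearing in \cite{Truss1}, and it is precisely here that the spread-out structure of $\Gamma$ is essential.
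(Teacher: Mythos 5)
Your overall strategy is exactly the paper's: transfer each intertwining relation $\alpha f = f\beta$ through $\xi$ to get $\alpha\,\xi(f) = \xi(f)\,\beta$, and then recover $f(q)$ as the unique point fixed by every $\alpha$ that occurs in a pair $(\alpha,\beta)\in S(f)$ with $\beta(q)=q$ (the paper phrases this as the equivalence $g(u)=s \iff \forall(\alpha,\beta)\in S(g)\,(\beta(u)=u \rightarrow \alpha(s)=s)$, which is the same recovery). Your observations that $(q,q)\in b$ forces $\alpha$ to fix $\xi(f)(q)$, and that adjoining $(p,p)$ to $a$ is compatible with condition (6), are both correct and appear in the paper.

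However, the step you defer --- producing, for each $r\neq p$, a pair $(a,b)\in P$ with $(q,q)\in b$ and $(r,r')\in a$, $r'\neq r$ --- is not a side issue: it \emph{is} the substance of the paper's proof of this lemma, and you leave it as an acknowledged ``principal obstacle'' rather than carrying it out. Moreover, the iterative ``finite back-and-forth closure'' you envisage is not what is needed and risks obscuring the point: since $f\in\Gamma$, the closure under (1)--(7) stabilises immediately, and the paper simply writes down $(a,b)$ explicitly in three cases. If $r\in\mathrm{im}(f)$, condition (6) forces $r'$ to be chosen in $\mathrm{im}(f)$ (say with $circ(p,r,r')$) and forces $b$ to contain $(f^{-1}(r),f^{-1}(r'))$, which is consistent with $(q,q)\in b$ precisely because $r\neq p=f(q)$; if $r$ lies in a blue class, take $r'\neq r$ in the same class and $b=\{(q,q)\}$ suffices; if $r$ lies in a red class with image point $y$, condition (2) forces $y$ into $\mathrm{dom}(a)$ (fixed by $a$), condition (6) then forces $(f^{-1}(y),f^{-1}(y))$ into $b$, and $r'$ must be taken in the same class \emph{on the same side of} $y$ so that $a$ remains a colour- and $\sim$-preserving partial circular automorphism (this also covers the possibility $y=p$). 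Without these verifications --- in particular the constraints on where $r'$ may be placed and the induced entries of $b$ --- the claim that the pair ``can always be closed off into a member of $P$'' is unsupported, so as written the proof is incomplete, even though the plan is sound and can be completed exactly as in the paper.
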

\begin{proof} Let $g \in \Gamma$, and $S(g) =\{(\alpha,\beta)\in G^2: \alpha g = g\beta \}$. Consider elements $u$ and $s$ 
of ${\mathbb Q}$ with $s\neq g(u)$. We construct $(\alpha,\beta)$ such that $\alpha(s) \neq s$ and $\beta(u) = u$. We consider 
two cases:

 \begin{enumerate}
  \item If $s\in {\rm im}(g)$, then $s$  and $g(u)$ lie in different red intervals. Let $A_s$ be the red interval 
  containing $s$. Since im$(g) \cong {\mathbb Q}$, there is $t \in {\rm im}(g)$ such that $circ(g(u), s, t)$. Since $g$ is $circ$-preserving, $circ(u, g^{-1}(s), g^{-1}(t))$. Hence $a = \{(g(u), g(u)), (s, t)\}$ and 
$b = \{(u, u), ((g^{-1}(s), g^{-1}(t))\}$ are finite partial automorphisms. We can verify that $(a, b) \in P$ (as defined 
before the previous lemma). 
  \item If $s \notin {\rm{im}}(g)$, we consider two cases:
  \begin{enumerate}
  \item[(i)] If $s$ lies in a blue interval $A_q$, we choose $t \neq s$ in the same interval. Since 
  $A_q \cong {\mathbb{Q}}$, $a = \{(s, t), (g(u), g(u))\}$ and $b = \{(u, u)\}$ are finite partial automorphisms. Again 
$(a,b) \in P$.
 \item[(ii)] If $s$ lies in a red interval $A_r$, with $r \in {\rm im}(g)$, we choose $t \neq s$ in $A_r$ on the same side of $s$ (which also allows for the possibility that $r = g(u)$), meaning that $circ(g(u), r, s)$ and $circ(g(u), t, s)$, or $g(u) = r$ and $circ(g(u), t, s)$, or $circ(g(u), s, r)$ and $circ(g(u), s, t)$. Then $a = \{(g(u), g(u)),(r,r), (s, t)\}$ and 
$b = \{(u, u), (g^{-1}(r), g^{-1}(r))\}$ are finite partial automorphisms, and once more we can verify that $(a,b)\in P$.
 \end{enumerate}
  \end{enumerate}
  
In each case we can extend $(a,b)$ to $(\alpha,\beta)$ such that $\alpha g = g \beta$ by appealing to Lemma \ref{3.1}, thus
$(\alpha,\beta)$ lies in $S(g)$, and satisfies $\beta(u) = u$, $\alpha(s) = t \neq s$. Now the element $g(u)$ can be 
recovered from $S(g)$, namely as

\begin{equation}\label{Eq:Recover}
g(u)=s\iff \forall (\alpha,\beta)\in S(g)~\left(\beta(u)=u\rightarrow \alpha(s)=s\right) 
\end{equation}
For if $g(u)=s$ and $\left(\alpha,\beta\right) \in S(g)$ with $\beta(u)=u$, then 
$\alpha(s)=\alpha\left(g(u)\right)=g\beta(u)=g(u)=s.$\\
Conversely, if $g(u)\neq s$, then by the above we can construct $\left(\alpha,\beta\right)\in S(g)$ such that $\beta(u)=u$ and 
$\alpha(s)\neq s$.

Finally, from Equation (\ref{Eq:Recover}) we obtain $\xi(g)=g$,  
\begin{align*}
 (u,s)\in g &\stackrel{(\ref{Eq:Recover})}{\iff} \forall (\alpha,\beta)\in 
 S(g)~\left(\beta(u)=u\rightarrow \alpha(s)=s\right)\\
  &\iff \forall (\alpha,\beta)\in S\left(\xi \left(g\right)\right)~
  \left(\beta(u)=u\rightarrow \alpha(s)=s\right) \\
  &\iff \left(u,s\right) \in \xi(g)
\end{align*}
\end{proof}       
 
Now we consider how the members of $\Gamma$ and $M$ interact. If $g \in \Gamma$ and $f \in M$, then any $\sim_{gf}$-class is a 
union of a convex family of $\sim_g$-classes. This is because im$(gf) \subseteq {\rm im}(g)$ and so  
$x \sim_g y \Rightarrow |[x, y] \cap {\rm im}(g)| \le 1 \mbox{ or } |[y, x] \cap {\rm im}(g)| \le 1 \Rightarrow |[x, y] \cap {\rm im}(gf)| \le 1 \mbox{ or } |[y, x] \cap {\rm im}(gf)| \le 1 \Rightarrow x \sim_{gf} y$.
Since all $\sim_g$-classes are isomorphic to $\mathbb Q$, so are all the $\sim_{gf}$-classes. The family of red $\sim_{gf}$ 
classes is ordered like $\mathbb Q$, since it corresponds precisely to the image of $gf$, which is a copy of $\mathbb Q$. And 
the blue $\sim_{gf}$ classes occupy some cuts among the red ones. Two distinct blue $\sim_{gf}$ classes must occupy distinct 
cuts, as if they had no red $\sim_{gf}$ class between them, then by definition of $\sim$, they'd have to be in the same 
$\sim_{gf}$-class. This means that we may write $\mathbb Q$ as a disjoint union of sets $A_q$ for $q$ lying in some subset $Q$ 
of $C_2$, where $A_q \cong {\mathbb Q}$ and all the red members of $C_2$ lie in $Q$. This describes the general situation. 
Depending on the precise values of $g$ and $f$, we may find that $gf \in \Gamma$ or not. We first see that if they both lie 
in $\Gamma$, then the product necessarily does too. 

\begin{lemma} \label{3.3} If $g_1$ and $g_2$ lie in $\Gamma$ then so does $g_2g_1$. \end{lemma}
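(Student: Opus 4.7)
The plan is to verify the two defining conditions for $g_2 g_1 \in \Gamma$: that every $\sim_{g_2 g_1}$-class is a non-empty open interval, and that the red/blue partition of these classes realises a copy of $C_2$.

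The first condition is almost immediate from the paragraph preceding the lemma, applied with $g = g_2 \in \Gamma$ and $f = g_1 \in M$: every $\sim_{g_2 g_1}$-class is a convex union of $\sim_{g_2}$-classes, each of which is a non-empty open interval because $g_2 \in \Gamma$, so the union is itself a non-empty open interval. The same discussion shows that the red $\sim_{g_2 g_1}$-classes correspond bijectively to $\mathrm{im}(g_2 g_1)$ and so are ordered like $\mathbb{Q}$, and that distinct blue classes must occupy distinct cuts of the reds. Thus the quotient is already indexed by some $Q \subseteq C_2$ containing all reds, and what remains is to show that $Q$ contains every blue element of $C_2$ as well.

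For this I would use $g_1 \in \Gamma$ to produce a blue class at every cut between two reds. Let $\mathbb{Q} = \bigcup_{t \in C_2} A_t$ be the $\Gamma$-partition of $\mathbb{Q}$ induced by $g_1$, with $\alpha_t$ the image point whenever $t$ is red. Given any two red $\sim_{g_2 g_1}$-classes, say at $g_2(\alpha_{t_1})$ and $g_2(\alpha_{t_2})$, density of blues in $C_2$ furnishes a blue $t_0 \in C_2$ lying circularly between $t_1$ and $t_2$. Then $A_{t_0}$ is a convex open interval strictly between $\alpha_{t_1}$ and $\alpha_{t_2}$ and disjoint from $\mathrm{im}(g_1)$. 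By order preservation of $g_2$ together with convexity of $A_{t_0}$, the convex hull of $g_2(A_{t_0})$ in $\mathbb{Q}$ is a non-empty open interval, strictly between $g_2(\alpha_{t_1})$ and $g_2(\alpha_{t_2})$, and disjoint from $\mathrm{im}(g_2 g_1)$: for if $g_2(q)$ lies in this hull, convexity of $A_{t_0}$ forces $q \in A_{t_0}$, hence $q \notin \mathrm{im}(g_1)$, and so $g_2(q) \notin \mathrm{im}(g_2 g_1)$. Any point of that open interval therefore belongs to a blue $\sim_{g_2 g_1}$-class sitting at the cut between the two chosen reds.

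The main obstacle is making the convex-hull step work cleanly: checking that the hull really is an honest non-empty open interval (not just a dense subset of one), that it lies strictly between the two image points without abutting them, and that the blue class it contains does not get absorbed into a neighbouring red class. Once this has been settled, $\mathbb{Q}$-density of the reds together with a blue class located at every cut between reds forces the partition of $\sim_{g_2 g_1}$-classes to be isomorphic to $C_2$, giving $g_2 g_1 \in \Gamma$.
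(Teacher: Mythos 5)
Your overall plan is the same as the paper's: reduce the problem to showing that between any two red $\sim_{g_2g_1}$-classes there is a blue one, and produce the blue one by pushing forward, via $g_2$, a blue $\sim_{g_1}$-class lying between the relevant $g_1$-red classes. The easy parts are handled correctly (classes are intervals, the reds are ordered like $\mathbb Q$, distinct blues occupy distinct cuts, and the convexity argument showing the hull of $g_2(A_{t_0})$ is disjoint from ${\rm im}(g_2g_1)$). But the heart of the lemma is exactly the point you list as an unresolved ``obstacle'': that the $\sim_{g_2g_1}$-class containing $g_2(A_{t_0})$ is not absorbed into a red class. The facts you actually establish --- the hull is an open interval, disjoint from ${\rm im}(g_2g_1)$, lying strictly between $g_2(\alpha_{t_1})$ and $g_2(\alpha_{t_2})$ --- do \emph{not} imply this. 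Since $g_2$ need not be surjective, the image points of $g_2g_1$ need not accumulate right up against the hull in any obvious way, and nothing you have said prevents the class of a point of the hull from reaching out past the hull and swallowing the nearest image point of $g_2g_1$ (an arc from such a point to that image point could a priori contain only that one image point, which by the definition of $\sim$ would make the class red). So the ``therefore'' in your final sentence is unsupported, and your closing paragraph concedes as much; the proof as written is incomplete precisely at the step that carries the content of the lemma.

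What is missing is a fuller use of $g_1\in\Gamma$: you only use it to find the blue class $A_{t_0}$ disjoint from ${\rm im}(g_1)$ (via density of blue points of $C_2$), but you never use the density of the \emph{red} $g_1$-classes around $A_{t_0}$. That density is what shields $g_2(A_{t_0})$. The paper's proof does this by writing $A_{t_0}=(a,b)$ with $a=\sup g_1a_n$, $b=\inf g_1b_n$ for image points $g_1a_n\nearrow a$, $g_1b_n\searrow b$ (possible exactly because $g_1\in\Gamma$), and then observing that any image point $g_2g_1(z)$ in the class of a point of $g_2(a,b)$ would have to satisfy $g_2g_1a_n<g_2g_1z<g_2g_1b_n$ for all $n$ (otherwise that class would contain two image points), whence $g_1z\in(a,b)$, a contradiction. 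Equivalently, one can argue that for $u\in A_{t_0}$ and any image point $\alpha$ of $g_1$, \emph{both} arcs from $u$ to $\alpha$ contain at least two image points of $g_1$ (density of red classes in the $C_2$-ordered quotient), a configuration preserved by the embedding $g_2$, so $g_2(u)$ cannot be $\sim_{g_2g_1}$-equivalent to any image point. Supplying an argument of this kind is what is needed to close your gap; without it, the proposal stops just short of the actual proof.
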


\begin{proof} From the above remarks, we just need to see that between any two $g_2g_1$-red intervals there is a $g_2g_1$-blue 
one, where this now means in the sense of the circular order. Let $g_2g_1x$ and $g_2g_1y$ lie in distinct intervals. Since 
$g_1 \in \Gamma$, there is a $g_1$-blue interval $(a, b) \subseteq (g_1x, g_1y)$, and its endpoints $a$ and $b$ are 
irrationals which are limits of points of im($g_1$). Let $a = \sup g_1a_n$, $b = \inf g_1b_n$ where $(a_n)$ is an increasing 
sequence, and $(b_n)$ is a decreasing sequence. Clearly $g_2(a, b)$ is disjoint from im($g_2g_1$). It 
is contained in a $g_2g_1$-blue interval (which therefore lies (strictly) in between $g_2g_1x$ and $g_2g_1y$) because the only 
way in which it could lie in a $g_2g_1$-red interval $(c, d)$ would be if there was a single point $g_2g_1(z)$ of im($g_2g_1$)
lying in it; but then $g_2g_1a_n < g_2g_1z < g_2g_1b_n$ for all $n$, giving $g_1a_n < g_1z < g_1b_n$ so $g_1z \in (a, b)$, 
contrary to $(a, b) \cap {\rm im}(g_1) = \emptyset$, so this cannot happen. (It is possible that the $g_2g_1$-blue interval is
larger than the convex hull of $g_2(a, b)$, but this does not affect the argument.)

To conclude, note that by definition of $\sim$, there cannot be consecutive blue intervals, or a consecutive pair of red/blue 
intervals, and as the red intervals are ordered like $\mathbb Q$ there are no two consecutive red intervals either. From this 
it easily follows that the family of intervals is ordered like $C_2$.  \end{proof}

\begin{lemma} \label{3.4} For any $f \in M$, there is $g \in \Gamma$ such that $gf \in \Gamma$. \end{lemma}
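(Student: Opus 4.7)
My plan is a back-and-forth construction of $g$ that simultaneously controls the $\sim_g$- and $\sim_{gf}$-structures required for $g\in\Gamma$ and $gf\in\Gamma$. Write $A=f(\mathbb{Q})$, a countable dense subset of $(\mathbb{Q},circ)$ order-isomorphic to $\mathbb{Q}$. I would produce $g$ alongside two nested families of irrational ``dividers'' in $\mathbb{R}$: a fine family $D_1$ whose induced partition of $\mathbb{Q}$ into open intervals is to realise $\sim_g$, and a coarse sub-family $D_2\subseteq D_1$ whose induced partition is to realise $\sim_{gf}$. The construction targets that each fine interval contain at most one $g(\mathbb{Q})$-point (red if one, blue if none) with the red/blue colouring forming a copy of $C_2$, and that each coarse interval contain at most one $g(A)$-point with its own red/blue colouring also forming a copy of $C_2$.

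Fix an enumeration of $\mathbb{Q}$ and a back-and-forth pairing $\psi$ between $A$ and the eventual set of coarse-red intervals. Build $g$ as a union of finite $circ$-preserving injections $g_0\subseteq g_1\subseteq\cdots$ in stages, dispatching one outstanding task per stage in round-robin fashion: (a) place the next $q\in\mathbb{Q}$ in the domain of $g$, with $g_{s+1}(q)$ going into a fresh fine-red interval inside the coarse-red interval $\psi(q)$ if $q\in A$, and otherwise into any fresh fine-red interval not inside a coarse-red interval already paired to some element of $A$ at the current stage; (b) add new irrationals to $D_1$ so as to create a new fine-blue interval between two existing fine-red intervals, keeping fine-blues dense in the eventual quotient; (c) the analogous step at the coarse level, adding irrationals to $D_2$; (d) openness tasks ensuring that each already-placed image point is properly isolated by new dividers at both levels. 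The feasibility of each step reduces to the Fra\"{\i}ss\'e extension property of $C_2$, applied once coarsely and once finely.

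In the limit, $g$ is a $circ$-preserving injection defined on all of $\mathbb{Q}$. By construction $\sim_g$ coincides with the fine partition, which forms $C_2$, giving $g\in\Gamma$; and $\sim_{gf}$ coincides with the coarse partition, which also forms $C_2$, giving $gf\in\Gamma$. The main obstacle is the interlocking of the two levels: a commitment made at the fine level (say the placement of a new image point) must never prevent a future task at the coarse level, and conversely. This is handled by recognising that the joint class of finite ``two-level partitioned circular orders with marked image points and a back-and-forth pairing'' is an amalgamation class; the amalgamation property reduces to that of $C_2$ and can be checked by a direct case analysis, which is where the bulk of the bookkeeping resides.
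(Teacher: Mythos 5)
Your overall target conditions are right (it suffices to make both quotients, the $\sim_g$-classes and the $\sim_{gf}$-classes, copies of $C_2$), but the recipe as written contains a placement rule that cannot be followed. Fix $a\in A=f(\mathbb{Q})$ and a rational $x\notin A$ such that one of the arcs from $a$ to $x$ meets $A$ only in $a$; such $x$ exist in abundance for many $f$ (for instance for every $f\in\Gamma$, where each red $\sim_f$-class is an open interval containing its unique $A$-point together with infinitely many non-$A$ rationals). Since $g$ is an injective $circ$-embedding, a point $g(a')$ with $a'\in A$ lies on the corresponding arc from $g(a)$ to $g(x)$ if and only if $a'$ lies on the arc from $a$ to $x$; hence that arc meets ${\rm im}(gf)=g(A)$ exactly in $\{g(a)\}$, so $g(a)\sim_{gf}g(x)$ by definition. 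Thus if your coarse partition is to realise $\sim_{gf}$, the image of every such $x$ is forced into the coarse-red interval $\psi(a)$; moreover infinitely many of them must land there, interleaved with fine-blue intervals, because a convex piece of a copy of $C_2$ containing at least two classes must contain both colours densely. Your rule (a) forbids precisely these placements (``not inside a coarse-red interval already paired to some element of $A$''), and since each $a$ is paired at some finite stage while infinitely many of the relevant $x$ are handled later, following the rule either blocks the construction or yields a coarse partition that does not coincide with $\sim_{gf}$.

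This points to the deeper problem, which is exactly the part you defer: the class of admissible finite two-level configurations is not an abstract amalgamation class whose amalgamation ``reduces to that of $C_2$'', because which configurations are completable depends on $f$ --- whether a coarse-red interval may be closed off containing a single fine-red class, or must absorb densely many images of non-$A$ rationals, is dictated by how $A$ interleaves with $\mathbb{Q}\setminus A$ (the $\sim_f$-blocks), as the computation above shows. So your finite approximations would have to record this relative information and your extension steps respect the forced containments; that verification is the whole content of the lemma and is missing. For comparison, the paper avoids all of this bookkeeping by a two-step composition: take an arbitrary $g_1\in\Gamma$, observe that the $\sim_{g_1f}$-classes are convex unions of $\sim_{g_1}$-classes whose red classes are ordered like $\mathbb{Q}$ with the blue ones occupying distinct cuts, choose a countable dense set $B$ of irrational cuts so that the red classes together with the cuts from $B$ form a copy of $C_2$, use an isomorphism of $\mathbb{Q}\cup B$ with $C_2$ to build $g_2\in\Gamma$ inserting blue gaps exactly at the cuts in $B$ (so that $g_2g_1f\in\Gamma$), and then conclude $g=g_2g_1\in\Gamma$ from Lemma \ref{3.3}. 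If you wish to salvage a direct construction, you should at minimum reserve, for each paired $a\in A$, the whole block of rationals not separated from $a$ by another point of $A$, and arrange a copy of $C_2$ of fine classes inside each such reserved block as well as at the coarse level.
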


\begin{proof} We start by taking any $g_1 \in \Gamma$, and then we see that we can describe $g_1f$ 'fairly' well. Then we 
take another $g_2 \in \Gamma$, which will be chosen so that $g_2g_1f \in \Gamma$. Appealing to Lemma \ref{3.3}, we may let 
$g = g_2g_1$ to conclude the proof.

By the discussion above, there is a subset $Q$ of $C_2$ containing all the red points, such that 
${\mathbb Q} = \bigcup_{q \in Q}A_q$ where the $A_q$ are copies of $\mathbb Q$ such that $circ(r, s, t)$ in $Q$ implies that
the corresponding $A_r, A_s, A_t$ are circularly ordered in the same way (as sets) and if $q \in Q$ is red, then $A_q$ is a 
$g_1f$-red interval, and if it is blue, then $A_q$ is a $g_1f$-blue interval. Now we choose a countable dense set $B$ of 
('blue') irrationals such that the family of sets $A_q$ for red $q \in Q$ and the set of members of $B$ which are cuts of this 
family, together form a copy of $C_2$. Note that $B$ will have a lot more members than these particular cuts, but these are 
the crucial ones which will ensure that our $g_2g_1f$ lies in $\Gamma$. Note that in addition ${\mathbb Q} \cup B$ {\em also} 
forms a copy of $C_2$, and we use it to find $g_2 \in \Gamma$. Now each $g_1f$-red interval gives rise to a $g_2g_1f$-red 
interval. This is because it clearly still just has one point in the image, and it doesn't extend `any further' because of the 
presence of the dense set $B$. The images of the members of $B$ which were inserted densely between the sets $A_q$ for red 
$q \in Q$ are $g_2g_1f$-blue intervals which enable us to see that the result is itself a copy of $C_2$.    \end{proof}

\begin{corollary} \label{3.5} Any injective endomorphism $\xi$ of $M$ which fixes $G$ pointwise also fixes every member of 
$M$. 
\end{corollary}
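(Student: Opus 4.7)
The plan is to derive the corollary as a short formal consequence of Lemmas~\ref{3.2}, \ref{3.3} and \ref{3.4}. Although $\xi$ is only known to fix the `spread out' family $\Gamma$, the point of Lemma~\ref{3.4} is that every $f \in M$ admits a left multiplier drawn from $\Gamma$ whose product with $f$ also lands back in $\Gamma$; this is exactly what is needed to propagate fixedness from $\Gamma$ to all of $M$.

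Concretely, I would start with an arbitrary $f \in M$ and invoke Lemma~\ref{3.4} to choose some $g \in \Gamma$ for which $gf \in \Gamma$. Since $\xi$ fixes every element of $\Gamma$ pointwise by Lemma~\ref{3.2}, both $\xi(g) = g$ and $\xi(gf) = gf$. Because $\xi$ is a monoid endomorphism, this gives
\[
gf \;=\; \xi(gf) \;=\; \xi(g)\,\xi(f) \;=\; g\,\xi(f).
\]
Now $g \in \Gamma \subseteq M$, so $g$ is in particular an injective function on $\mathbb{Q}$, and from $g\,\xi(f)(x) = g\,f(x)$ for every $x$ we may left-cancel $g$ to conclude that $\xi(f)(x) = f(x)$ for all $x \in {\mathbb Q}$. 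Hence $\xi(f) = f$, as desired.

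I do not anticipate any genuine obstacle at this stage: essentially all the substantive work has been front-loaded into Lemma~\ref{3.2} (fixedness of $\Gamma$ via the recovery formula for $g(u)$ from $S(g)$), Lemma~\ref{3.3} (closure of $\Gamma$ under composition, which is what guarantees $gf \in \Gamma$ once both factors are suitably placed), and Lemma~\ref{3.4} (production of the required left multiplier). The present corollary merely packages these facts together, relying on injectivity of the multiplier $g$ to cancel. It is then well positioned to feed, together with the small index property for $\mathrm{Aut}({\mathbb Q}, circ)$ to be verified in the next section, into an appeal to Lemma~12 of~\cite{Bodirsky} to obtain automatic homeomorphicity of $M$, in parallel with the structure of Theorem~\ref{2.2}.
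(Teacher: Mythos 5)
Your proposal is correct and matches the paper's own argument essentially verbatim: take $g \in \Gamma$ with $gf \in \Gamma$ via Lemma~\ref{3.4}, apply Lemma~\ref{3.2} to both, and left-cancel the injective $g$ from $g\,\xi(f) = gf$. (The role you ascribe to Lemma~\ref{3.3} is in fact internal to the proof of Lemma~\ref{3.4}, but this does not affect the argument.)
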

\begin{proof} Let $f \in M$. By Lemma \ref{3.4}, there is $g \in \Gamma$ such that $gf \in \Gamma$. By Lemma \ref{3.2}, $\xi$ 
fixes $g$ and $gf$. Hence $\xi(g)\xi(f) = \xi(gf) = gf = \xi(g)f$. Since $g$ is left cancellable, so is $\xi(g)$, and hence 
$\xi(f) = f$.   \end{proof} 

\begin{lemma} \label{3.6} Aut$({\mathbb Q}, circ)$ has the small index property. \end{lemma}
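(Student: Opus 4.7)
The plan is to deduce SIP for $G := \mathrm{Aut}(\mathbb{Q}, circ)$ from SIP for $\mathrm{Aut}(\mathbb{Q}, \le)$ by exactly the coset trick used at the end of the proof of Theorem~\ref{2.2}, but with countable (rather than finite) index. A conjugate copy of $\mathrm{Aut}(\mathbb{Q}, \le)$ will appear inside $G$ as the stabilizer of a point, and this stabilizer will have small enough index to transfer SIP back up.

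Concretely, I would fix $a \in \mathbb{Q}$ (say $a = 0$) and let $G_a$ be the stabilizer of $a$ in $G$. Since $(\mathbb{Q}, circ)$ is homogeneous, $G$ acts transitively on $\mathbb{Q}$, so $[G : G_a] = |\mathbb{Q}| = \aleph_0$. I then identify $G_a$ with $\mathrm{Aut}(\mathbb{Q}, \le)$ via the linear order $<_a$ on $\mathbb{Q} \setminus \{a\}$ defined by $x <_a y \Leftrightarrow circ(a, x, y)$. A direct check shows that $<_a$ is a countable dense linear order without endpoints (its type is $\mathbb{Q}_{>a} + \mathbb{Q}_{<a}$, still isomorphic to $\mathbb{Q}$), and that $G_a$ is precisely $\mathrm{Aut}(\mathbb{Q} \setminus \{a\}, <_a)$, because $circ$ can be recovered from $<_a$ by the cyclic rule $circ(x,y,z) \Leftrightarrow (x <_a y <_a z) \vee (y <_a z <_a x) \vee (z <_a x <_a y)$. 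Hence $G_a \cong \mathrm{Aut}(\mathbb{Q}, \le)$, which has SIP by~\cite{Truss2}.

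Finally, given $H \le G$ of index strictly less than $2^{\aleph_0}$, the natural map of left cosets embeds $G_a/(H \cap G_a)$ into $G/H$, so $[G_a : H \cap G_a] < 2^{\aleph_0}$. Applying SIP in $G_a$ yields a finite $B \subseteq \mathbb{Q}$ with $(G_a)_B \le H \cap G_a \le H$; and $(G_a)_B$ coincides with $G_{B \cup \{a\}}$, the pointwise stabilizer in $G$ of a finite subset of $\mathbb{Q}$, which is what SIP demands. I do not foresee any real obstacle: the only point requiring a moment's thought is the identification of $G_a$ with $\mathrm{Aut}(\mathbb{Q}, \le)$ via the linearisation $<_a$, and the verification that the pointwise stabilizer inside $G_a$ of $B$ is the same as the pointwise stabilizer in $G$ of $B \cup \{a\}$, both of which are entirely routine.
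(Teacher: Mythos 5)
Your proof is correct and is essentially the paper's argument: the paper also deduces SIP for $\mathrm{Aut}(\mathbb{Q}, circ)$ from SIP for $\mathrm{Aut}(\mathbb{Q}, \le)$ via a copy of the latter sitting inside the former with countable index supplied by the orbit--stabilizer theorem, i.e.\ a point stabilizer. Your write-up just makes explicit what the paper leaves implicit, namely that the relevant subgroup is $G_a$ identified with $\mathrm{Aut}(\mathbb{Q},\le)$ through the linearisation $<_a$, and that $(G_a)_B = G_{B\cup\{a\}}$ so the conclusion has the required form.
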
     
\begin{proof} This follows easily from the observation that ${\rm Aut}({\mathbb Q}, <)$ has countable index in 
${\rm Aut}({\mathbb Q}, circ)$ (as follows by the orbit-stabilizer theorem) and the fact that Aut$({\mathbb Q}, <)$ has
the small index property (\cite{Truss1}). \end{proof}

\begin{theorem} \label{3.7} ${\rm Emb}({\mathbb Q}, circ)$ has automatic homeomorphicity. \end{theorem}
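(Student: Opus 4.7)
The plan is to apply Lemma 12 of \cite{Bodirsky} exactly as in the proof of Theorem \ref{2.2}, but the task is considerably simpler in the circular case because Corollary \ref{3.5} already does the combinatorial heart of the work directly, with no need to invoke involutions to swap between order-preserving and order-reversing behaviour. Three ingredients are required: (i) the small index property for $G = {\rm Aut}({\mathbb Q}, circ)$, (ii) density of $G$ in $M$, and (iii) the fact that every injective endomorphism of $M$ fixing $G$ pointwise is the identity.

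For (i), I would simply cite Lemma \ref{3.6}. For (iii), I would cite Corollary \ref{3.5}. So the only point that genuinely needs a verification in the statement itself is (ii). For this, I would argue from homogeneity of $({\mathbb Q}, circ)$: given any $f \in M$ and any finite $B \subseteq {\mathbb Q}$, the restriction $f\restriction_B$ is a finite partial automorphism of $({\mathbb Q}, circ)$, since $f$ is injective and $circ$-preserving, and so by homogeneity extends to some $\alpha \in G$ agreeing with $f$ on $B$. This places $\alpha$ in the basic open neighbourhood $\bigcap_{q \in B}{\cal B}_{q\,f(q)}$ of $f$, which is exactly density of $G$ in $M$ in the topology described in the introduction.

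With these three ingredients in hand, Lemma 12 of \cite{Bodirsky} yields automatic homeomorphicity of $M = {\rm Emb}({\mathbb Q}, circ)$ at once: any isomorphism of $M$ onto a closed submonoid of $Tr(\Omega')$ is forced to be a homeomorphism. I do not expect any real obstacle, since all of the non-trivial work has already been carried out in Lemmas \ref{3.1}--\ref{3.6}; compared to Theorem \ref{2.2} the argument is even cleaner, because here there is no need to use an involution to flip orientation (the circular order is already invariant under rotations in a way that makes the $\Gamma$-construction symmetric), so Corollary \ref{3.5} feeds directly into the Bodirsky lemma.
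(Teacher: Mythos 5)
Your proposal is correct and takes essentially the same route as the paper: the paper's proof of Theorem \ref{3.7} likewise combines Corollary \ref{3.5}, the small index property from Lemma \ref{3.6}, density of $G$ in $M$, and Lemma 12 of \cite{Bodirsky}. Your homogeneity argument simply supplies the detail for the density claim that the paper asserts without proof, and it is valid since any finite restriction of an embedding of $({\mathbb Q}, circ)$ is a partial automorphism of a homogeneous structure.
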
     
\begin{proof} This follows from Corollary \ref{3.5} and Lemma \ref{2.2}, since $G$ is dense in $M$, and $G$ has the 
small index property, so that by \cite{Bodirsky1} we know that $G$ has automatic homeomorphicity. \end{proof}

We now adapt the ideas of \cite{Truss1} and section 2 to demonstrate automatic homeomorphicity for 
${\rm End}({\mathbb Q}, circ)$. Once more by the small index property, if $H$ is a subgroup of $G$ of countable index, there
is a minimal finite subset $B$ of $\mathbb Q$ such that $G_B \le H$. To show that $H \le G_{\{B\}}$, we deduce from Lemma 
\ref{2.1} its analogue in the current situation.

\begin{lemma} \label{3.8} If $B_1$ and $B_2$ are finite subsets of $\mathbb Q$, and $G = {\rm Aut}({\mathbb Q}, circ)$, then 
$G_{B_1 \cap B_2} = \langle G_{B_1}, G_{B_2} \rangle$.  \end{lemma}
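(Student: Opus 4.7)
The plan is to reduce the problem to Lemma~\ref{2.1} by exploiting the fact that the point stabilizer $G_{\{q\}}$ in $G = {\rm Aut}({\mathbb Q}, circ)$ is naturally isomorphic to ${\rm Aut}({\mathbb Q}, <)$. I split into two cases according to whether $B_1 \cap B_2$ is empty.

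When $B_1 \cap B_2 \neq \emptyset$, I pick any $q \in B_1 \cap B_2$ and observe that $G_{B_1}$, $G_{B_2}$ and $G_{B_1 \cap B_2}$ all lie in $G_{\{q\}}$. The formula $a <_q b \Leftrightarrow circ(q, a, b)$ defines a dense linear order without endpoints on ${\mathbb Q} \setminus \{q\}$, so $({\mathbb Q} \setminus \{q\}, <_q) \cong ({\mathbb Q}, <)$; moreover, restriction to ${\mathbb Q} \setminus \{q\}$ identifies $G_{\{q\}}$ with ${\rm Aut}({\mathbb Q} \setminus \{q\}, <_q)$, sending the pointwise stabilizer of $B_i$ in $G$ to the pointwise stabilizer of $B_i \setminus \{q\}$, and likewise for $B_1 \cap B_2$. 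A direct application of Lemma~\ref{2.1} in this transported linear-order setting then yields the desired equality.

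When $B_1 \cap B_2 = \emptyset$ we have $G_{B_1 \cap B_2} = G$, so the task becomes to prove $G = \langle G_{B_1}, G_{B_2} \rangle$. The statement is vacuous if either $B_i$ is empty, so I assume both are non-empty and fix $q \in B_1$, $q' \in B_2$. First I apply the previous case to the pair $(B_1, B_2 \cup \{q\})$, whose intersection is the singleton $\{q\}$: this yields $G_{\{q\}} = \langle G_{B_1}, G_{B_2 \cup \{q\}} \rangle \subseteq \langle G_{B_1}, G_{B_2} \rangle$, and symmetrically $G_{\{q'\}} \subseteq \langle G_{B_1}, G_{B_2} \rangle$. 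It then suffices to establish the auxiliary claim that $G = \langle G_{\{q\}}, G_{\{q'\}} \rangle$ for any distinct $q, q' \in {\mathbb Q}$. For this I use that $G_{\{q'\}} \cong {\rm Aut}({\mathbb Q}, <)$ acts transitively on ${\mathbb Q} \setminus \{q'\}$: given $g \in G$ with $g(q) \neq q'$, I pick $h \in G_{\{q'\}}$ with $h(g(q)) = q$, so that $hg \in G_{\{q\}}$ and hence $g = h^{-1}(hg) \in G_{\{q'\}} G_{\{q\}}$; when $g(q) = q'$, I reduce to the preceding sub-case by first pre-multiplying by some $h' \in G_{\{q\}}$ with $h'(q') \neq q'$.

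The main obstacle will be the disjoint case: the ``greatest moved point'' induction from the proof of Lemma~\ref{2.1} has no direct analogue in a circular order, since there is no maximum. The enlargement-plus-transitivity trick above is what replaces it, trading the induction for an appeal to the case of non-trivial intersection together with the transitivity of a single-point stabilizer.
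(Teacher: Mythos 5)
Your proof is correct, and its first half coincides with the paper's: both arguments rest on transporting Lemma~\ref{2.1} through the point stabilizer $G_{\{q\}} \cong {\rm Aut}({\mathbb Q}, \le)$ (via the linear order $a <_q b \Leftrightarrow circ(q,a,b)$), and when $B_1 \cap B_2 \neq \emptyset$ the two proofs are essentially identical. Where you diverge is the disjoint case. The paper handles it by writing an arbitrary $f \in G$ as a product of two elements each having a fixed point (choose $h$ taking $a$ to $f(a)$ and fixing a rational in between, so that $h$ and $h^{-1}f$ both have fixed points), and then observing that any element with fixed point $a$ lies in $\langle (G_a)_{B_1}, (G_a)_{B_2} \rangle$ by the empty-intersection instance of Lemma~\ref{2.1} applied inside $G_a$ --- so the whole of $\langle G_{B_1}, G_{B_2} \rangle \ni f$ follows at once. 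You instead enlarge the sets, applying the nonempty-intersection case to $(B_1, B_2 \cup \{q\})$ and $(B_1 \cup \{q'\}, B_2)$ to get $G_{\{q\}}, G_{\{q'\}} \le \langle G_{B_1}, G_{B_2} \rangle$, and then prove separately that two distinct point stabilizers generate $G$, using transitivity of $G_{\{q'\}}$ on ${\mathbb Q}\setminus\{q'\}$ (with the small extra step when $g(q) = q'$). Both routes are sound and of comparable length; the paper's factorization avoids the auxiliary claim about two point stabilizers generating $G$, while your enlargement trick packages the transitivity argument in a reusable group-theoretic statement and never needs to factor a general element. One cosmetic remark: when some $B_i = \emptyset$ the statement is trivially true (both sides equal $G$) rather than vacuous, and note that your own Case 1, applied to sets meeting in a single point, already invokes Lemma~\ref{2.1} with disjoint transported sets --- harmless, since Lemma~\ref{2.1} covers that, but it slightly undercuts your closing remark that the disjoint situation has no analogue of the linear-order argument.
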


\begin{proof} We exploit the fact that the stabilizer $G_a$ of any point $a \in {\mathbb Q}$ is isomorphic to 
${\rm Aut}({\mathbb Q}, \le)$, and can then deduce the result from Lemma \ref{2.1}. As before we just have to check that
$G_{B_1 \cap B_2} \le \langle G_{B_1}, G_{B_2} \rangle$. Pick $a \in B_1 \cap B_2$ if this is non-empty. Then 
$g \in (G_a)_{B_1 \cap B_2} \le \langle (G_a)_{B_1}, (G_a)_{B_2} \rangle$ by Lemma \ref{2.1}, 
$\le \langle G_{B_1}, G_{B_2} \rangle$.

If $B_1 \cap B_2 = \emptyset$, we start by writing an arbitrary $f \in G$ as the product of two elements, each having a fixed 
point. Take any $a \in {\mathbb Q}$, and let $b = f(a)$. There is $h \in G$ taking $a$ to $b$, and fixing some (rational) 
point of $(b, a)$. Then $h$ and $h^{-1}f$ each has a fixed point (since $h^{-1}f$ fixes $a$). Given this observation, it 
suffices to show that any member of $g$ of $G$ having a fixed point lies in $\langle G_{B_1}, G_{B_2} \rangle$. Let $a$ be 
fixed by $g$. Running the same argument as in the first paragraph, we find that 
$g \in G_{(B_1 \cup \{a\}) \cap (B_2 \cup \{a\})} = (G_a)_{B_1 \cap B_2} \le \langle (G_a)_{B_1}, (G_a)_{B_2} \rangle \le \langle G_{B_1}, G_{B_2} \rangle$.
\end{proof}

Now that we know that $H \le G_{\{B\}}$, we need to consider what the options are for such $H$ (in section 2 there were only 
two). This time, if $B = \{b_0, b_1, \ldots , b_{n-1}\}$ in cyclic order is non-empty (that is, $n \ge 1$), 
$|G_{\{B\}}: G_{B}| = |B|$, since the cyclic ordering on $B$ has to be preserved and each cyclic permutation is possible. It 
easily follows that for some factor $m$ of $n$, if $s_m(b_i) = b_{i + m}$ where the subscripts are taken modulo $n$, then 
$H = \{g \in G: g \mbox{ acts on $B$ as a power of } s_m\}$. Let us say that $m$ is the `type' of the orbit. Given this, we 
can just adapt the machinery from section 2. Namely, $\Omega$ may be written as the union of $G$-orbits $\Omega_i$ for 
$i \in I$, $n_i$ and $m_i$ are specified, and for each $i$, $\Omega_i$ is a family of elements of the form 
$a^i_{{\underline B}_{m_i}}$ where $B \in [{\mathbb Q}]^{n_i}$. Here, ${\underline B}_m$ is the set of images of 
$\underline B$ under powers of $s_m$, so that the orderings of $B$ which arise are in the correct anticlockwise cyclic order, 
and form an orbit under $\langle s_m \rangle$. This is all done so that the action of $\theta$ is compatible with this 
enumeration for members of $G$. More precisely, we let $\Omega_i = \{a^i_{h{\underline B}_{m_i}}: h \in G\}$, where
$a^i_{h{\underline B}_{m_i}} = \theta(h)a^i_{{\underline B}_{m_i}}$. The point is that for $g, h \in G$,
$a^i_{g{\underline B}_{m_i}} = a^i_{h{\underline B}_{m_i}} \Leftrightarrow \theta(h^{-1}g)$ fixes 
$a^i_{{\underline B}_{m_i}} \Leftrightarrow$ $h^{-1}g$ acts on $\underline B$ as a power of $s_{m_i}$.

Given this background, our remaining task is to show how the machinery developed in the previous section for the betweenness 
relation carries over to this setting. The analogue of Lemma \ref{2.4} holds here by a similar continuity argument, and the 
analogues of Lemma \ref{2.5} and \ref{2.7} also carry across straightforwardly. Next we have the analogue of Lemma \ref{2.6}.

\begin{lemma} \label{3.9} Let $i \in I$ and $B \in [{\mathbb Q}]^{n_i}$ where $n_i \neq 0$, and $\Omega_i$ have type $m_i$. 
Let $a^i_{{\underline B}_{m_i}} \in \Omega_i$. Then there is an idempotent endomorphism $h \in E$ having image $B$ such that 
$\theta(h)$ fixes $a^i_{{\underline B}_{m_i}}$. \end{lemma}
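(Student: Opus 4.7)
My plan is to imitate the construction from Lemma \ref{2.5} (the betweenness case), adapted to the circular setting. List $B=\{b_0,b_1,\ldots,b_{n_i-1}\}$ in the cyclic order on $\mathbb Q$. The key step is to partition $\mathbb Q$ into $n_i$ pairwise disjoint arcs $I_0,I_1,\ldots,I_{n_i-1}$ arranged in the same cyclic order, with $b_k\in I_k$. Concretely, for each $k$ pick a rational cut point $c_k$ lying strictly between $b_{k-1}$ and $b_k$ in the cyclic sense (subscripts mod $n_i$), and let $I_k$ be the half-open arc from $c_k$ (inclusive) to $c_{k+1}$ (exclusive), traversed in the positive cyclic direction; each $I_k$ then contains $b_k$ and the $I_k$ together partition $\mathbb Q$. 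Define $h:\mathbb Q\to\mathbb Q$ by $h(x)=b_k$ for $x\in I_k$.

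Next I would verify three things. First, $h$ preserves the reflexive $circ$ relation: if $x\in I_j$, $y\in I_k$, $z\in I_l$ and $circ(x,y,z)$ holds in the strict sense, then the arcs $I_j, I_k, I_l$ appear in that cyclic order, so $b_j, b_k, b_l$ do too, giving $circ(b_j,b_k,b_l)$ in the reflexive sense (and this is trivial if any two of $j,k,l$ coincide, since then two of the image values agree). Second, $h$ is idempotent with image exactly $B$, because $h(b_k)=b_k$ for every $k$. Third, since $h$ fixes $B$ pointwise it fixes each of the cyclic orderings of $B$ making up the orbit ${\underline B}_{m_i}$, so $h({\underline B}_{m_i})={\underline B}_{m_i}$.

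Finally I would invoke the analogue of Lemma \ref{2.4} already asserted to carry over to the circular setting: since $h\in E$ and $|h(B)|=n_i=|B|$, that lemma yields
\[
\theta(h)(a^i_{{\underline B}_{m_i}})=a^i_{h({\underline B}_{m_i})}=a^i_{{\underline B}_{m_i}},
\]
as required.

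The only genuine obstacle is making sure the step-function $h$ really is a $circ$-endomorphism. In the linear betweenness case the partition into intervals is immediate, but in the circular context the partition must be \emph{oriented} so that the cyclic order on the arcs matches the cyclic order of $B$, and the arcs must be half-open rather than open so that $\mathbb Q$ is covered without gaps and each $b_k$ lies in the arc it is supposed to be sent to. Once the partition is set up correctly, the remaining verifications are exactly as in Lemma \ref{2.5}.
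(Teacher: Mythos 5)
Your construction is exactly the paper's: subdivide the circularly ordered $\mathbb Q$ into $n_i$ pairwise disjoint intervals, each containing one point of $B$, collapse each interval to that point, note $h$ fixes $\underline B$ and hence ${\underline B}_{m_i}$, and apply the circular analogue of Lemma \ref{2.4}. The extra care you take with the orientation and half-open arcs is a sound (if not strictly necessary) elaboration of the same argument, so the proposal is correct and matches the paper's proof.
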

\begin{proof} Subdivide the circularly ordered $\mathbb Q$ into $n_i$ pairwise disjoint intervals, each containing a single 
member of $B$. Then $h({\underline B}) = {\underline B}$, so $h$ also fixes ${\underline B}_{m_i}$, so by the analogue of 
Lemma \ref{2.5} for this case, 
$\theta(h)(a^i_{{\underline B}_{m_i}}) = a^i_{h{\underline B}_{m_i}} = a^i_{{\underline B}_{m_i}}$. \end{proof} 

The final result of this section is as follows.

\begin{theorem} \label{3.10} ${\rm End}({\mathbb Q}, circ)$ has automatic homeomorphicity. \end{theorem}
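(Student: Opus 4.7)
The plan is to parallel the proof of Theorem~\ref{2.7}, showing that $\theta$ is both continuous and open. Sub-basic open sets take the forms ${\cal B}_{qr} = \{f \in E : f(q) = r\}$ in $E$ and ${\cal C}_{ij{\underline B}_{m_i}{\underline C}_{m_j}} = \{f' \in E' : f'(a^i_{{\underline B}_{m_i}}) = a^j_{{\underline C}_{m_j}}\}$ in $E'$. A structural simplification over Theorem~\ref{2.7} is that $G = {\rm Aut}({\mathbb Q}, circ)$ contains no orientation-reversing elements, since any order-reversing permutation of $\mathbb Q$ destroys $circ$; consequently the elaborate order-reversing subcases of Theorem~\ref{2.7} are replaced here by bookkeeping around the cyclic-shift type $m_i \mid n_i$.

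For continuity, fix $f \in \theta^{-1}({\cal C}_{ij{\underline B}_{m_i}{\underline C}_{m_j}})$ with $B = \{q_1, \ldots, q_{n_i}\}$ and $r_k = f(q_k)$, and verify that $\bigcap_{k} {\cal B}_{q_k r_k} \subseteq \theta^{-1}({\cal C}_{ij{\underline B}_{m_i}{\underline C}_{m_j}})$. For any $f'$ in this intersection, Lemma~\ref{3.9} supplies an idempotent $h \in E$ of image $B$ with $\theta(h)$ fixing $a^i_{{\underline B}_{m_i}}$. Since $f'h = fh$ everywhere,
\[
\theta(f')(a^i_{{\underline B}_{m_i}}) = \theta(f'h)(a^i_{{\underline B}_{m_i}}) = \theta(fh)(a^i_{{\underline B}_{m_i}}) = \theta(f)(a^i_{{\underline B}_{m_i}}) = a^j_{{\underline C}_{m_j}}.
\]
The case $B = \emptyset$ is immediate from the analogue of Lemma~\ref{2.4}.

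For openness, given $\theta(f) \in \theta({\cal B}_{qr})$, first observe that $|{\rm im}(f)| \ge n_i > 0$ for some $i \in I$: otherwise the analogue of Lemma~\ref{2.6} forces every $\theta(f)(a^i_{{\underline B}_{m_i}})$ into a rank-zero orbit, and $\theta(gf) = \theta(f)$ for any non-identity $g \in G$ would contradict injectivity of $\theta$. Choose such an $i$ and split on $|{\rm im}(f)|$. If $|{\rm im}(f)| > n_i$, pick $B \ni q$ of size $n_i + 1$ on which $f$ is $1$-$1$, written in cyclic order $q_0 = q, q_1, \ldots, q_{n_i}$, and set $r_k = f(q_k)$, $B_1 = B \setminus \{q_{n_i}\}$, $B_2 = B \setminus \{q_{n_i - 1}\}$, $C_k = f(B_k)$ with induced cyclic orderings. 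Then $\theta(f) \in {\cal C}_{ii{\underline{B_1}}_{m_i}{\underline{C_1}}_{m_i}} \cap {\cal C}_{ii{\underline{B_2}}_{m_i}{\underline{C_2}}_{m_i}}$, and any $\theta(h)$ in this intersection has $h(B_k) = C_k$, so $h(B) = C$ bijectively with $h(q_{n_i})$ forced to be the unique element $r_{n_i}$ of $C \setminus C_1$; cyclic-order preservation on $B$ then uniquely extends this single fixing to $h(q_k) = r_k$ for every $k$, giving $h(q) = r$. In the remaining case $|{\rm im}(f)| = n_i$, $C := {\rm im}(f)$ is fixed of size $n_i$, and one argues, as in Theorem~\ref{2.7}, that any $\theta(h) \in {\cal C}_{ii{\underline B}_{m_i}{\underline C}_{m_i}}$ has ${\rm im}(h) = C$ (applied to every $n_i$-transversal on which $h$ is $1$-$1$), and that any rotational ``twist'' $h = \rho \circ f$ with $\rho \in G_{\{C\}}$ a non-trivial power of $s_{m_i}$ satisfies $hg = f$ for some suitable $g \in G_{\{B\}}$, yielding $\theta(h) = \theta(f)$ and contradicting injectivity.

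The main obstacle is this final case: whereas Theorem~\ref{2.7} needs to eliminate only a single order-reversing twist of $f$, the circular setting admits up to $n_i/m_i$ rotational twists all consistent with the constraint ${\cal C}_{ii{\underline B}_{m_i}{\underline C}_{m_i}}$. Ruling each out requires the explicit interval-by-interval construction from the closing paragraph of Theorem~\ref{2.7}, now adapted to the cyclic fibre partition of $\mathbb Q$ induced by $f$: for each non-trivial $\rho = s_{m_i}^l$ one must exhibit a circular-order-preserving $g \in G_{\{B\}}$ that conjugates the action of $h = \rho \circ f$ back into $f$ on every fibre, which is the technical heart of the proof.
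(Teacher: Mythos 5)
Your proposal matches the paper's proof of Theorem \ref{3.10} essentially step for step: continuity via the idempotent of Lemma \ref{3.9}, and openness by first locating an orbit with $0 < n_i \le |\mathrm{im}(f)|$, then using two overlapping cyclic $n_i$-element sets when $|\mathrm{im}(f)| > n_i$ (your $B_1,B_2$ construction just makes explicit what the paper only sketches as ``overlapping sequences''), and finally eliminating the residual rotational twist (a shift by $t$ with $m_i \mid t$) by producing $g \in G$ with $hg = f$ and contradicting injectivity of $\theta$. The only small quibbles are cosmetic: in the rank-zero step you need a $g$ with $gf \neq f$ (e.g.\ a fixed-point-free rotation), not an arbitrary non-identity $g$, and in the final case one initially knows only that $h$ agrees with a twist of $f$ on $B$, so the fibre-matching choice of $g$ (adapted from the closing paragraph of Theorem \ref{2.7}, a detail the paper itself leaves terse) is exactly what upgrades this to the global identity $hg = f$, as you correctly flag.
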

\begin{proof} Let $E'$ be a closed submonoid of $Tr(\Omega)$ where $|\Omega| = \aleph_0$, and let $\theta$ be an isomorphism 
from $E$ to $E'$, which we have to show is a homeomorphism. We decompose $\Omega$ into orbits $\Omega_i$ as above, and 
this time the sub-basic open sets in $E$ and $E'$ are of the form ${\cal B}_{qr} = \{f \in E: f(q) = r\}$ and 
${\cal C}_{ij{\underline B}_{m_i}{\underline C}_{m_j}} = \{f \in E': f(a^i_{{\underline B}_{m_i}}) = a^j_{{\underline C}_{m_j}}\}$, so for continuity we have to show that each 
$\theta^{-1}({\cal C}_{ij{\underline B}_{m_i}{\underline C}_{m_j}})$ is open in $E$, and for openness that each 
$\theta({\cal B}_{qr})$ is open in $E'$.  

For openness of $\theta^{-1}({\cal C}_{ij{\underline B}_{m_i}{\underline C}_{m_j}})$, let 
$B = \{q_1, q_2, \ldots, q_{n_i}\}$, where $B$ is listed in increasing order. Let 
$f \in \theta^{-1}({\cal C}_{ij{\underline B}_{m_i}{\underline C}_{m_j}})$ and let $f(q_k) = r_k$ (the $r_k$ need not be 
distinct). Thus $f \in \bigcap_{k = 1}^{n_i} {\cal B}_{q_kr_k}$, and we have to show that this set is contained in 
$\theta^{-1}({\cal C}_{ij{\underline B}_{m_i}{\underline C}_{m_j}})$. Let $f'$ be any member of 
$\bigcap_{k = 1}^{n_i} {\cal B}_{q_kr_k}$. Thus for each $k$, $f'(q_k) = r_k$, and hence $f$ and $f'$ agree on $B$. By Lemma 
\ref{3.9} there is an idempotent $h \in E$ with image $B$ such that $\theta(h)$ fixes $a^i_{{\underline B}_{m_i}}$. Then 
$f'h = fh$, and so 
$\theta(f')(a^i_{{\underline B}_{m_i}}) = \theta(f'h)(a^i_{{\underline B}_{m_i}}) = \theta(fh)(a^i_{{\underline B}_{m_i}}) =
\theta(f)(a^i_{{\underline B}_{m_i}}) = a^j_{{\underline C}_{m_j}}$, and therefore 
$f' \in \theta^{-1}({\cal C}_{ij{\underline B}_{m_i}{\underline C}_{m_j}})$ as required.

Next we show that $\theta({\cal B}_{qr})$ is open for any $q, r$. As in the proof of Theorem \ref{2.8} we may find $i$ such 
that $n_i \le |{\rm im}(f)|$. Choose $B$ and $C$ of size $n_i$ with $q \in B$ and such that $f(B) = C$. By the analogue of 
Lemma \ref{2.5}, $\theta(f)(a^i_{{\underline B}_{m_i}}) = a^i_{f{\underline B}_{m_i}} = a^i_{{\underline C}_{m_i}}$, showing 
that $\theta(f) \in {\cal C}_{ii{\underline B}_{m_i}{\underline C}_{m_i}}$. Let $\theta(h)$ lie in this set. As before,
$h(B) = C$, and in fact $h({\underline B}_{m_i}) = {\underline C}_{m_i}$. As before, the problem is that we do not know
that $h$ takes $q$ to $r$. For this, we follow a similar strategy to that adopted in the proof of Theorem \ref{2.8}.
First if for some $i$, $|{\rm im}(f)| > n_i$ we find `overlapping' cyclically ordered sequences of length $n_i$, and use
the extra room thus created to recover sufficiently the structure, so that endomorphisms lying in the intersection of
two sets of the form ${\cal C}_{ii{\underline B}_{m_i}{\underline C}_{m_i}}$ must take $q$ to $r$. Finally, if 
$n_i = |{\rm im}(f)|$ or 0 for every $i$, and the endomorphism $h$ which arises in the proof satisfies $h(q) \neq r$, we show 
that $\theta(h) = \theta(f)$, contrary to the injectivity of $\theta$. More precisely, let $B = \{q_1, q_2, \ldots, q_{n_i}\}$
in increasing order, and $C = \{r_1, r_2, \ldots, r_{n_i}\}$ be enumerated so that $f(q_k) = r_k$ for each $k$. Let
$h(q_k) = r_{k+t}$ for each $k$, fixed $t$, where $t \neq 0$. Then there is $g \in G$ taking $q_k$ to $q_{k - t}$ for
each $k$ (where all the suffices are taken modulo $n_i$), and we find that $hg(q_k) = h(q_{k - t}) = r_k = f(q_k)$, giving
$hg = f$. Here the fact that $h({\underline B}_{m_i}) = {\underline C}_{m_i}$ ensures that $m_i$ divides $t$ from which it follows that $g$ fixes ${\underline B}_{m_i}$. Therefore 
$\theta(f)(a^i_{{\underline B}_{m_i}}) = \theta(hg)(a^i_{{\underline B}_{m_i}}) = \theta(h)(a^i_{g{\underline B}_{m_i}})
= \theta(h)(a^i_{{\underline B}_{m_i}})$ and so $\theta(f) = \theta(h)$, as stated. \end{proof}

\section{The separation relation on $\mathbb Q$}

Since Aut$({\mathbb Q}, circ)$ has index 2 in Aut$({\mathbb Q}, sep)$, we may use the `same' method as in section 2 to deduce 
automatic homeomorphicity for Emb$({\mathbb Q}, sep)$ from the corresponding result for Emb$({\mathbb Q}, circ)$. The main 
step as usual is to consider an injective endomorphism $\xi$ of Emb$({\mathbb Q}, sep)$ to itself, which fixes all group 
elements, and show that it must be the identity. We fix some involution $i$ in Aut$({\mathbb Q}, sep)$, which interchanges the sets of 
orientation preserving and orientation reversing members of Emb$({\mathbb Q}, sep)$, and we consider the class $\Gamma$ as in section 3. If for any $g \in M$ we define
$S(g)$ to be $\{(\alpha, \beta) \in G_1^2: \alpha g = g \beta\}$, where $G_1 = {\rm Aut}({\mathbb Q}, circ)$, the same calculations used in section 3 for members of 
Emb$({\mathbb Q}, circ)$ apply to Emb$({\mathbb Q}, sep)$ to show that $\xi$ fixes all members of $\Gamma$, and hence (since $i$ is necessarily fixed by $\xi$) also all members of $M$.
This establishes the following result.

\begin{theorem} \label{4.1} ${\rm Emb}({\mathbb Q}, sep)$ has automatic homeomorphicity. \end{theorem}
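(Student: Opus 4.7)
The plan is to transcribe the proof of Theorem~\ref{2.2} with $({\mathbb Q}, \le)$ replaced by $({\mathbb Q}, circ)$, exploiting the fact that $G_0 := {\rm Aut}({\mathbb Q}, circ)$ has index $2$ in $G := {\rm Aut}({\mathbb Q}, sep)$ and correspondingly $M_0 := {\rm Emb}({\mathbb Q}, circ)$ has index $2$ in $M := {\rm Emb}({\mathbb Q}, sep)$, as a disjoint union of two cosets under any fixed orientation-reversing involution. To invoke Lemma~12 of \cite{Bodirsky}, I must verify density of $G$ in $M$ and the small index property of $G$. Density holds because any orientation-reversing $f \in M$ factors as $i(i^{-1}f)$ for a fixed orientation-reversing involution $i \in G$, with $i^{-1}f \in M_0$ approximable by elements of $G_0$ (the density underlying Theorem~\ref{3.7}); since left-multiplication by $i$ is a self-homeomorphism of $M$, $f$ is approximated by elements of $iG_0 \subseteq G$. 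The small index property follows immediately from Lemma~\ref{3.6} and $|G : G_0| = 2 < 2^{\aleph_0}$.

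These reductions leave the main task of proving that any injective endomorphism $\xi$ of $M$ that fixes $G$ pointwise is the identity. Fix a concrete orientation-reversing involution, for instance $i(x) = -x$, which preserves $sep$ because $sep$ is invariant under orientation reversal. For any $f \in M$, exactly one of $f, if$ lies in $M_0$, and since $\xi(i) = i$ gives $\xi(if) = i\xi(f)$, exactly one of $\xi(f), \xi(if)$ is orientation-preserving. Define $\eta : M_0 \to M_0$ by letting $\eta(f)$ be the orientation-preserving member of $\{\xi(f), \xi(if)\}$, verify that $\eta$ is an injective monoid endomorphism fixing $G_0$ pointwise, and invoke Corollary~\ref{3.5} to conclude $\eta = {\rm id}_{M_0}$, so that $\xi$ setwise fixes the pair $\{f, if\}$ for every $f \in M$.

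To upgrade setwise fixation to pointwise, repeat the argument with a second orientation-reversing involution $i' \neq i$, for example $i'(x) = 1 - x$. Then $\xi(f) \in \{f, i'f\}$ as well, and since $if = i'f$ would require $-f(x) = 1 - f(x)$ for every $x \in {\mathbb Q}$ (impossible), I conclude $\xi(f) \in \{f, if\} \cap \{f, i'f\} = \{f\}$, so $\xi = {\rm id}_M$. The step most in need of careful checking is the verification that $\eta$ really is multiplicative: this amounts to showing $\xi(M_0) \subseteq M_0$, which can be derived from the fact that every idempotent of $M$ is orientation-preserving (an orientation-reversing element squared is orientation-preserving, so cannot be a non-trivial idempotent) combined with a decomposition of arbitrary elements of $M_0$ as products of elements of $G_0$ and idempotents in the spirit of Lemma~3.3 of \cite{Truss1}. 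Once coset-preservation is secured, the remainder of the argument is a mechanical transcription of Theorem~\ref{2.2}.
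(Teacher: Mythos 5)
Your overall skeleton is the same as the paper's: reduce to the orientation-preserving part via an orientation-reversing involution, apply the circular-order result (Corollary \ref{3.5}/Theorem \ref{3.7}), verify density and the small index property for ${\rm Aut}({\mathbb Q}, sep)$, and use two distinct involutions to pass from setwise to pointwise fixation; all of that is fine. The problem lies in the one step you yourself single out as critical, namely that $\eta$ is multiplicative, equivalently that $\xi(M_0)\subseteq M_0$ where $M_0 = {\rm Emb}({\mathbb Q}, circ)$. (The worry is genuine: writing $\eta(f)$ as $i^{a(f)}\xi(f)$, if some $\xi(f_2)$ were orientation-reversing one would need identities like $i\,\xi(f_1) = \xi(f_1)\,i$ on ${\rm im}\,\xi(f_2)$, which there is no reason to have.) Your proposed derivation of $\xi(M_0)\subseteq M_0$ cannot work: every element of $M = {\rm Emb}({\mathbb Q}, sep)$ is injective, so $h^2 = h$ forces $h = {\rm id}$, i.e.\ the identity is the \emph{only} idempotent of $M$. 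Hence ``every idempotent is orientation-preserving'' carries no information, and no non-surjective member of $M_0$ is a product of elements of $G_0$ and idempotents (such a product is an automorphism). Lemma 3.3 of \cite{Truss1} concerns the endomorphism monoid, factoring an order-preserving endomorphism as a surjection composed with an embedding; it supplies neither idempotents nor any decomposition inside the embedding monoid. So, as written, the coset-preservation step on which your whole argument hinges is not established.

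The gap is repairable, and the repair in fact makes the $\eta$-detour unnecessary: the proof of Lemma \ref{3.2} uses only that $\xi$ is a homomorphism fixing ${\rm Aut}({\mathbb Q}, circ)$ pointwise and that $\xi(g)$ is a self-map of $\mathbb Q$, so it applies verbatim to your $\xi\colon M \to M$ and shows that $\xi$ fixes every member of $\Gamma$; then Lemma \ref{3.4} together with left-cancellability gives $\xi(f) = f$ for every orientation-preserving $f$ (in particular $\xi(M_0)\subseteq M_0$), and for orientation-reversing $f$ one writes $f = i(if)$ with $if \in M_0$ and gets $\xi(f) = \xi(i)\xi(if) = i(if) = f$. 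For what it is worth, the paper's own one-paragraph proof of Theorem \ref{4.1}, like that of Theorem \ref{2.2}, simply asserts that $\eta$ is an injective endomorphism of ${\rm Emb}({\mathbb Q}, circ)$ without comment; your instinct that this needs checking is sound --- it is your proposed way of checking it that fails.
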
  

We now indicate how the methods of sections 2 and 3 are adapted in this case to yield proofs of automatic homeomorphicity
of End(${\mathbb Q}, sep$). Once more if $H$ is a subgroup of $G = {\rm Aut}({\mathbb Q}, sep)$ of countable index, then 
there is a unique (finite, minimal) $B \subset {\mathbb Q}$ such that $G_B \le H \le G_{\{B\}}$. If the two stabilizers are
equal, then as usual we can identify the orbit with $[{\mathbb Q}]^n$ for $n = |B|$. Otherwise, $|B| \ge 2$,   
$|G_{\{B\}}: G_B| = 2n$, and we have to consider rotations and reflections. If we let $B = \{b_0, b_1, \ldots, b_{n-1}\}$ in 
increasing order, then the value of $H$ can be `captured' a combination of the types from the two previous sections, that is, 
it will be a pair consisting of 1 or 0, to tell us whether $H$ has an orientation-reversing member or not, and a factor $m$ 
of $n$ such that the orientation-preserving subgroup of $H$ acts on $B$ as a power of $s_m$ defined at the end of section 3.
In section 3 we were essentially considering the action of a cyclic group, but here the corresponding action is dihedral. We 
omit the details, but state the main theorem which applies here, and which is proved by methods similar too those in sections 
2 and 3 (with some adaptations).

As usual, by the small index property, if $H$ is a subgroup of $G$ of countable index, there is a minimal finite subset $B$ of 
$\mathbb Q$ such that $G_B \le H$, and using a combination of the tricks from sections 2 and 3, also $H \le G_{\{B\}}$, so 
that $B$ is uniquely determined. This time there are extra options for what $H$ can be, obtained by reversing the orientation 
of $B$ (so it is essentially the dihedral group that is now acting). If we write 
${\underline B} = (b_0, b_1, \ldots , b_{n-1})$ in increasing order, we look at the family of sequences that arise by applying 
members of $H$. Since $H \le G_{\{B\}}$, all members of $H$ preserve the {\em set} $B$, but may perform `rotations' or
`reflections'. As in section 3, we can capture the possibilities via the set of images under $H$, which are indexed by 
subgroups of the dihedral group of order $2n$. We write the set as ${\underline B}_H$ (where it isn't really $H$ which is 
relevant---rather its induced action on $\underline B$).

Thus we may write $\Omega$ as the union of $G$-orbits $\Omega_i$ for $i \in I$, $n_i$ and $H_i$ are specified, and for each 
$i$, $\Omega_i$ is the family of elements of the form $a^i_{{\underline B}_{H_i}}$ where $B \in [{\mathbb Q}]^{n_i}$. The same 
lemmas as before are now proved in this case. We obtain the analogue of Lemma \ref{2.4} by means of a continuity argument, and
this leads to the analogue of Lemma \ref{2.5}. The analogue of Lemma \ref{2.6} is as follows.

\begin{lemma} \label{4.2} Let $i \in I$ and $B$ be a non-empty member of $[{\mathbb Q}]^{n_i}$. Then there is an
idempotent separation-preserving endomorphism $h \in E$ having $B$ as image such that 
$\theta(h)$ fixes $a^i_{{\underline B}_{H_i}}$. \end{lemma}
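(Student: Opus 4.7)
The plan is to take exactly the retraction onto $B$ used in Lemmas~\ref{2.5} and~\ref{3.9}, and observe that, since any $circ$-preserving map is automatically $sep$-preserving, it already satisfies every requirement in the present, dihedral, setting.

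In detail, I would enumerate $B = \{b_0, b_1, \ldots, b_{n_i-1}\}$ in cyclic order and partition the circularly ordered $\mathbb Q$ into $n_i$ pairwise disjoint arcs $I_1, I_2, \ldots, I_{n_i}$, arranged in the same cyclic order as $B$, with $b_{k-1} \in I_k$ for each $k$. Define $h \colon {\mathbb Q} \to {\mathbb Q}$ by $h(x) = b_{k-1}$ whenever $x \in I_k$. By construction $h$ is idempotent, its image is $B$, and it fixes every element of $B$. Because the arcs sit in the same cyclic order as their target points, $h$ preserves $circ$, and since $sep$ was defined purely in terms of $circ$, $h$ is $sep$-preserving as well; hence $h \in E$.

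Since $h$ fixes $B$ pointwise, it fixes the increasing enumeration $\underline B$, and therefore fixes each element of the dihedral orbit ${\underline B}_{H_i}$, giving $h({\underline B}_{H_i}) = {\underline B}_{H_i}$. As $|h(B)| = n_i = |B|$, the analogue of Lemma~\ref{2.4} invoked earlier in this section for the separation setting applies and yields
\[
\theta(h)(a^i_{{\underline B}_{H_i}}) \;=\; a^i_{h{\underline B}_{H_i}} \;=\; a^i_{{\underline B}_{H_i}},
\]
which is the required conclusion.

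I do not anticipate any real obstacle here: the construction is essentially the one used for the circular case, and the only genuinely new point is the one-line verification that a $circ$-preserving collapse is also $sep$-preserving. The mild subtlety worth flagging is making sure the cut points between arcs can be chosen in $\mathbb Q$ (which is fine, since $\mathbb Q$ is dense between any two of the $b_k$'s in the circular sense), and that the dihedral indexing ${\underline B}_{H_i}$ is transported correctly; but as $h$ acts trivially on $B$, both points are automatic.
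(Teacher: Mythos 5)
Your proposal is correct and is essentially the paper's own argument: the paper's proof of Lemma~\ref{4.2} is precisely the remark that one may take $h$ orientation-preserving and reuse the construction of Lemma~\ref{3.9} (the circular retraction onto $B$), with the observation that such an $h$ preserves $sep$ and, fixing $B$ pointwise, fixes ${\underline B}_{H_i}$, so the analogue of Lemma~\ref{2.4} gives $\theta(h)(a^i_{{\underline B}_{H_i}}) = a^i_{{\underline B}_{H_i}}$. Your write-up just spells out the details the paper leaves implicit.
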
 

For this we can take $h$ to be orientation-preserving, so use the same method as in Lemma \ref{3.9}.

Lemma \ref{2.7} carries over straightforwardly to the new situation, and the main result is as follows.

\begin{theorem} \label{4.3} ${\rm End}({\mathbb Q}, sep)$ has automatic homeomorphicity. \end{theorem}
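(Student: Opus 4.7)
The plan is to follow the template of Theorems \ref{2.7} and \ref{3.10}, using the orbit decomposition $\Omega = \bigcup_{i \in I}\Omega_i$ with $\Omega_i = \{a^i_{g{\underline B}_{H_i}}: g \in G\}$ already described, together with the analogues of Lemmas \ref{2.3}, \ref{2.4}, \ref{2.6} and Lemma \ref{4.2}. Given an isomorphism $\theta: E \to E'$, where $E'$ is a closed submonoid of $Tr(\Omega)$, we must show both that $\theta^{-1}({\cal C}_{ij{\underline B}_{H_i}{\underline C}_{H_j}})$ is open in $E$ (continuity) and that $\theta({\cal B}_{qr})$ is open in $E'$ (openness).

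For continuity, I would argue exactly as in Theorem \ref{3.10}: given $f \in \theta^{-1}({\cal C}_{ij{\underline B}_{H_i}{\underline C}_{H_j}})$ with $B = \{q_1, \ldots, q_{n_i}\}$ and $r_k = f(q_k)$, invoke Lemma \ref{4.2} to obtain an idempotent separation-preserving $h \in E$ with image $B$ such that $\theta(h)$ fixes $a^i_{{\underline B}_{H_i}}$. Then any $f' \in \bigcap_k {\cal B}_{q_kr_k}$ agrees with $f$ on $B$, so $f'h = fh$, and composing through $\theta(h)$ yields $\theta(f')(a^i_{{\underline B}_{H_i}}) = \theta(f)(a^i_{{\underline B}_{H_i}}) = a^j_{{\underline C}_{H_j}}$, giving $f' \in \theta^{-1}({\cal C}_{ij{\underline B}_{H_i}{\underline C}_{H_j}})$.

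For openness, fix $\theta(f) \in \theta({\cal B}_{qr})$. First I would rerun the translation-injectivity argument from Theorem \ref{2.7} to guarantee some $i$ with $0 < n_i \le |\mathrm{im}(f)|$. If for some such $i$ we actually have $n_i < |\mathrm{im}(f)|$, I pick two overlapping cyclically ordered $n_i$-sets $B_1, B_2$ both containing $q$ (so that no single nontrivial dihedral symmetry of the union fixes both), take $C_k = f(B_k)$, and show $\theta(f) \in {\cal C}_{ii{\underline{B_1}}_{H_i}{\underline{C_1}}_{H_i}} \cap {\cal C}_{ii{\underline{B_2}}_{H_i}{\underline{C_2}}_{H_i}} \subseteq \theta({\cal B}_{qr})$: any $h$ with $\theta(h)$ in this intersection satisfies $h(B_k) = C_k$ up to the action allowed by $H_i$, and the overlap eliminates both rotational and reflectional ambiguity, forcing $h(q) = r$.

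The real obstacle is the residual case $n_i = |\mathrm{im}(f)|$ or $0$ for every $i$, where the extra room trick is unavailable. Here I imitate the last paragraph of Theorem \ref{2.7} but with dihedral rather than simply orientation-based symmetry. Assuming for contradiction that some $h$ with $\theta(h) \in {\cal C}_{ii{\underline B}_{H_i}{\underline C}_{H_i}}$ has $h(q) \neq r$, the equality $h({\underline B}_{H_i}) = {\underline C}_{H_i}$ forces $h$ and $f$ to agree on $B$ up to a power of $s_{m_i}$ composed possibly with an orientation reversal permitted by $H_i$. I would then construct $g \in G_{\{B\}}$ realising exactly this rotation-plus-reflection on $B$, and extend it interval by interval (as at the end of Theorem \ref{2.7}) so that $hg = f$ on all of $\mathbb Q$. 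Since $g \in G$ acts on $\underline B$ inside $H_i$, $\theta(g)$ fixes $a^i_{{\underline B}_{H_i}}$, yielding $\theta(f) = \theta(hg) = \theta(h)\theta(g) = \theta(h)$, contradicting injectivity. The fiddly part will be the simultaneous handling of rotational shift and reflection when defining $g$ on the complementary intervals, because both the cyclic machinery of section 3 and the reversal machinery of section 2 have to be reconciled in a single separation-preserving bijection.
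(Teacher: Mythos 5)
Your proposal is correct and follows essentially the same route as the paper's proof: continuity via the idempotent of Lemma \ref{4.2}, and openness by the same three-case analysis (the translation-injectivity argument to secure $n_i \le |\mathrm{im}(f)|$, overlapping sets when the inequality is strict, and in the residual case constructing $g$ with $hg = f$ whose action on $\underline B$ lies within $H_i$, so that $\theta(f) = \theta(h)$ contradicts injectivity), including the split into orientation-preserving and orientation-reversing $h$. The only cosmetic point is that your equality $\theta(h)\theta(g) = \theta(h)$ should be read as an equality after evaluation at $a^i_{{\underline B}_{H_i}}$ (since $\theta(g)$ fixes that point, not all of $\Omega$), which is exactly how the paper phrases the same step.
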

\begin{proof} For this we use a combination of the methods of Theorems \ref{2.8} and \ref{3.10}. In fact the group is a degree 
2 extension of that for the circular ordering, so this case bears the same relationship to the circular ordering as does the 
betweenness relation to the linear ordering. 

To give a few details, once more, let $E'$ be a closed submonoid of $Tr(\Omega)$ where $|\Omega| = \aleph_0$, and let 
$\theta$ be an isomorphism from $E$ to $E'$. Write $\Omega = \bigcup_{i \in I}\Omega_i$ where 
$\Omega_i = \{a^i_{{\underline B}_{H_i}}: B \in [{\mathbb Q}]^{n_i}\}$, $H_i$ a subgroup of the dihedral group of order 
$2n_i$. The sub-basic open sets in $E$ and $E'$ are of the form ${\cal B}_{qr} = \{f \in E: f(q) = r\}$ and 
${\cal C}_{ij{\underline B}_{H_i}{\underline C}_{H_j}} = \{f \in E': f(a^i_{{\underline B}_{H_i}}) = a^j_{{\underline C}_{H_j}}\}$. The proof that $\theta$ is continuous is as before. For openness we show that each 
$\theta({\cal B}_{qr})$ is open in $E'$. We find $i$ such that $n_i \le |{\rm im}(f)|$, and choose $B$ and $C$ of size $n_i$ 
with $q \in B$ and such that $f(B) = C$. As before, $\theta(f)(a^i_{{\underline B}_{H_i}}) = a^i_{f{\underline B}_{H_i}} = a^i_{{\underline C}_{H_i}}$, showing that $\theta(f) \in {\cal C}_{ii{\underline B}_{H_i}{\underline C}_{H_i}}$. Let 
$\theta(h)$ lie in this set. Then $h(B) = C$, and $h({\underline B}_{H_i}) = {\underline C}_{H_i}$. If for some $i$, 
$|{\rm im}(f)| > n_i$ we argue as for the circular ordering case. If however $n_i = |{\rm im}(f)|$ or 0 for every $i$, and 
$h(q) \neq r$, we show that $\theta(h) = \theta(f)$, contrary to the injectivity of $\theta$. Assume that $f$ is 
orientation-preserving (with a similar argument in the orientation-reversing case). If $B = \{q_1, q_2, \ldots, q_{n_i}\}$
in increasing order, and $C = \{r_1, r_2, \ldots, r_{n_i}\}$ are enumerated so that $f(q_k) = r_k$ for each $k$, and $h$ is
also orientation-preserving, then we use the argument from Theorem \ref{3.10}, where if $h(q_k) = r_{k+t}$ for 
each $k$, we use $g \in G$ taking $q_k$ to $q_{k - t}$ for each $k$. If however $h$ is orientation-reversing, it must take 
the form $h(q_k) = r_{t-k}$ for some fixed $t$, and instead we find $g \in G$ such that $g(q_k) = q_{t - k}$ for each $k$.
This gives $hg = f$. Since $f({\underline B}_{H_i}) = {\underline C}_{H_i}$ and 
$h({\underline B}_{H_i}) = {\underline C}_{H_i}$, it follows that $hg({\underline B}_{H_i}) = h({\underline B}_{H_i})$, and 
as $h$ is 1--1 on $B$, that $g({\underline B}_{H_i}) = {\underline B}_{H_i}$. Therefore
$\theta(f)(a^i_{{\underline B}_{H_i}}) = \theta(hg)(a^i_{{\underline B}_{H_i}}) = 
\theta(h)(a^i_{g{\underline B}_{H_i}}) = \theta(h)(a^i_{{\underline B}_{H_i}})$, showing that $\theta(f) = \theta(h)$, 
contrary to $\theta$ injective. \end{proof}

\section{Automatic homeomorphicity of the polymorphism clones on $\mathbb Q$ for the reflexive case}

Our aim in this section is to carry across the results from \cite{Truss1} for the polymorphism clone of the rational numbers 
under the reflexive ordering to the reducts discussed earlier in the paper, betweenness, circular order, and separation 
relations. For definitions of the relevant notions here we refer the reader to~\cite{Bodirsky1}, but mention a few notations 
that are needed. Denoting by $\mathcal{O}_{A}$ the collection of all finitary operations $f\colon A^n\to A$ ($n\geq 0$) on a 
set~$A$, a subset $C\subseteq \mathcal{O}_{A}$ is called a (`concrete') clone on~$A$ if it is closed under the operations of 
composition when defined (that is, the `arities' are correct) and it contains all `projections'. These are the maps
$\pi_i^{(n)}\colon A^n \to A$ given by $\pi_i^{(n)}(a_1, a_2, \ldots, a_n) = a_i$, where $1 \le i \le n$. The collection of 
all polymorphisms of a relational structure always forms a clone, and clones arising in this way are precisely the ones that 
are topologically closed. Of central interest here are the clones $\mathop{\mathrm{Pol}}(\mathbb{Q},betw)$ of 
polymorphisms of $(\mathbb{Q}, betw)$ and $\mathop{\mathrm{Pol}}(\mathbb{Q},circ)$ of polymorphisms of 
$(\mathbb{Q}, circ)$, which are the families of all $n$-ary functions on~$\mathbb Q$ for $n \ge 0$ that preserve~$betw$ and 
~$circ$, respectively. Spelling out precisely what this means, $f\colon\mathbb{Q}^n \to \mathbb{Q}$ lies in 
$\mathop{\mathrm{Pol}}(\mathbb{Q},betw)$ provided that if 
$(a_1, a_2, \ldots, a_n), (b_1, b_2, \ldots, b_n), (c_1, c_2, \ldots, c_n) \in \mathbb{Q}^n$ and 
$betw(a_i, b_i, c_i)$ for all~$i$, then 
$betw\left(f(a_1, a_2, \ldots, a_n), f(b_1, b_2, \ldots, b_n),f(c_1, c_2, \ldots, c_n)\right)$. Similarly, 
$f\in \mathop{\mathrm{Pol}}(\mathbb{Q},circ)$ if $circ(a_i,b_i,c_i)$ for all~$i$, implies that 
$circ\left(f(a_1, a_2, \ldots, a_n), f(b_1, b_2, \ldots, b_n),f(c_1, c_2, \ldots, c_n)\right)$.

We also study the clone $\mathop{\mathrm{Pol}}(\mathbb{Q},sep)$ of polymorphisms of $(\mathbb{Q}, sep)$, which is the family 
of all $n$-ary functions on~$\mathbb Q$ for $n \ge 0$ that preserve~$sep$. In other words $f : \mathbb{Q}^n \to \mathbb{Q}$ 
lies in $\mathop{\mathrm{Pol}}(\mathbb{Q},sep)$ if 
$(a_1, a_2, \ldots, a_n), (b_1, b_2, \ldots, b_n), (c_1, c_2, \ldots, c_n), (d_1, d_2, \ldots, d_n)\in \mathbb{Q}^n$ and 
$sep(a_i,b_i,c_i,d_i)$ for all~$i$ implies that 
$sep\left(f(a_1, a_2, \ldots, a_n), f(b_1, b_2, \ldots, b_n), f(c_1, c_2, \ldots, c_n), f(d_1, d_2, \ldots, d_n)\right)$.  

There is a corresponding notion of `abstract clone', which we do not require here. Let us note also that the set~
$\mathcal{O}_{A}$ of all finitary operations on~$A$ forms a clone, even a polymorphism clone (e.g., 
$\mathcal{O}_{A} = \mathop{\mathrm{Pol}}(A, =)$). This is the analogue of $\mathop{\mathrm{Sym}}(A)$ for the automorphism 
group and $\mathop{\mathrm{Tr}}(A)$ for the endomorphism monoid. In each of these cases, betweenness, circular, and 
separation relations we write $M$, $E$, and $G$ for the monoids of embeddings, endomorphisms, and the group of automorphisms, 
respectively, and $P$ for the corresponding polymorphism clone. 

The set-up is as follows. An isomorphism $\theta$ is given from $P$ to a closed subclone $P'$ of the full polymorphism clone 
${\cal O}_\Omega$ on a countable set $\Omega$, and our task is to show that it is a homeomorphism. 

Relying on Proposition 27 of~\cite{Bodirsky1}, when proving automatic homeomorphicity of the clone $P$ in each of the cases 
mentioned above, it will suffice to verify that any clone isomorphism between $P$ and a closed clone on some countable set is 
continuous.

\begin{theorem}\label{5.1}
$\mathop{\mathrm{Pol}}(\mathbb{Q}, betw)$ has automatic homeomorphicity, meaning that any isomorphism~$\theta$ from 
$P = \mathop{\mathrm{Pol}}(\mathbb{Q}, betw)$ to a closed subclone~$P'$ of\/~$\mathcal{O}_\Omega$, for a countable
set\/~$\Omega$, is a homeomorphism.
\end{theorem}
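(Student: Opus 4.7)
The plan follows the strategy from section 6 of \cite{Truss1}, transferred to the betweenness setting. By Proposition 27 of \cite{Bodirsky} it suffices to verify continuity of $\theta$. First I would restrict to the unary part: since $P'$ is a closed subclone of $\mathcal{O}_\Omega$, its unary part is a closed submonoid of $\mathop{\mathrm{Tr}}(\Omega)$, and $\theta$ restricts to an isomorphism from $E = {\rm End}(\mathbb{Q}, betw)$ onto that closed submonoid. By Theorem \ref{2.7} this restriction is already a homeomorphism, so the entire orbit analysis of section~2 is available: write $\Omega = \bigcup_{i \in I}\Omega_i$ with elements labelled $a^i_B$ or $a^i_{\underline B}$, and Lemmas \ref{2.3}--\ref{2.6} apply to the unary action of $\theta$.

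The key step is a \emph{restriction lemma} for $n$-ary polymorphisms: for $f \in P$ of arity $n$ and labels $a^{i_1}_{B_1}, \ldots, a^{i_n}_{B_n}$ (with orderings where appropriate) such that each $B_k$ is nonempty, the value $\theta(f)(a^{i_1}_{B_1}, \ldots, a^{i_n}_{B_n})$ depends only on the restriction of $f$ to $B_1 \times \cdots \times B_n$. To prove it I would use Lemma \ref{2.5} to pick, for each $k$, an idempotent order-preserving $h_k \in E$ with ${\rm im}(h_k) = B_k$ and $\theta(h_k)(a^{i_k}_{B_k}) = a^{i_k}_{B_k}$. The polymorphism $f \circ (h_1, \ldots, h_n)$ is then determined by $f|_{B_1 \times \cdots \times B_n}$; applying $\theta$, which respects clone composition, and using that each $\theta(h_k)$ fixes $a^{i_k}_{B_k}$, yields
\[
\theta(f)(a^{i_1}_{B_1}, \ldots, a^{i_n}_{B_n}) = \theta\bigl(f \circ (h_1, \ldots, h_n)\bigr)(a^{i_1}_{B_1}, \ldots, a^{i_n}_{B_n}),
\]
as required. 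Coordinates with $B_k = \emptyset$ are handled separately: by Lemma \ref{2.4}, $\theta(h)(a^{i_k}_\emptyset) = a^{i_k}_\emptyset$ for every $h \in E$, so I can substitute a constant endomorphism with arbitrary fixed value $q_k \in \mathbb{Q}$ in the $k$th slot and reduce to the nonempty case, at the price of only having to specify $f$ at $q_k$ in that coordinate.

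Continuity is then immediate. Given a basic open set $U = \{f' \in P' : f'(a_1, \ldots, a_n) = b\}$ of $P'$ containing $\theta(f)$, with $a_k = a^{i_k}_{B_k}$, set $B_k' = B_k$ when $B_k \neq \emptyset$ and $B_k' = \{q_k\}$ for an arbitrary fixed rational otherwise, and define
\[
V = \bigcap_{(b_1, \ldots, b_n) \in B_1' \times \cdots \times B_n'} \bigl\{f'' \in P : f''(b_1, \ldots, b_n) = f(b_1, \ldots, b_n)\bigr\}.
\]
This is a finite intersection of sub-basic open sets of $P$ containing $f$, and by the restriction lemma $\theta(V) \subseteq U$. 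The main obstacle I anticipate is the bookkeeping for labels of type $t_i = 1$ (ordered subscripts) and for empty-subset coordinates: ordered labels are covered directly by Lemma \ref{2.5}, while empty $B_k$ require the constant-endomorphism workaround above. Neither difficulty alters the overall logical structure.
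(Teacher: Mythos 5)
Your proposal is correct and follows essentially the same route as the paper: openness via Proposition~27 of \cite{Bodirsky}, restriction to the unary part where Theorem~\ref{2.7} and the section~2 orbit machinery apply, and reduction of $n$-ary continuity to the existence of idempotents with finite image fixing each label (Lemma~\ref{2.5}, with the $B=\emptyset$ case via Lemma~\ref{2.4}). The only difference is that you spell out the composition-with-idempotents ``restriction lemma'' explicitly, whereas the paper cites it as Lemma~5.1 of \cite{Truss1}.
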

\begin{proof} Openness follows from Proposition~27 of~\cite{Bodirsky1}. To demonstrate that~$\theta$ is continuous, we use the 
machinery from section~2 to provide the assumptions of Lemma~5.1 of~\cite{Truss1}. Note that these are properties of the
restriction $\theta\restriction_E\colon P^{(1)}\to P'^{(1)}$, which is a monoid isomorphism between the unary parts 
$P^{(1)} = E$ and $E':= P'^{(1)}$ (these are closed monoids because~$P$ and
$\mathop{\mathrm{Tr}}(\mathbb{Q})$, and~$P'$ and $\mathop{\mathrm{Tr}}(\Omega)$ are closed sets). Namely, we have to verify 
that for every $b\in\Omega$ we can find an endomorphism $h\in E$ with finite image such that 
$\theta(h)(b) = \theta\restriction_E(h)(b)= b$. However, this is precisely the content of Lemma~\ref{2.6} applied
to~$\theta\restriction_E$. \end{proof}

Similarly, using Proposition~27 of~\cite{Bodirsky1}, Lemma~5.1 of~\cite{Truss1} and Lemmas \ref{3.9}, and \ref{4.2}, respectively, one can prove following theorem. 
 
\begin{theorem}\label{5.2} $\mathop{\mathrm{Pol}}(\mathbb{Q}, circ)$, $\mathop{\mathrm{Pol}}(\mathbb{Q}, sep)$ have automatic homeomorphicity.  \end{theorem}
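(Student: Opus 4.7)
The plan is to mimic the proof of Theorem~\ref{5.1} essentially verbatim in each of the two cases. Given an isomorphism $\theta$ from $P = \mathop{\mathrm{Pol}}(\mathbb{Q}, circ)$ (respectively $\mathop{\mathrm{Pol}}(\mathbb{Q}, sep)$) to a closed subclone $P'$ of $\mathcal{O}_\Omega$, openness of $\theta$ is immediate from Proposition~27 of~\cite{Bodirsky}, so only continuity needs to be proved.

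For continuity I would pass to the unary part. Since $P$ is closed in $\mathcal{O}_\mathbb{Q}$ and $\mathop{\mathrm{Tr}}(\mathbb{Q})$ is closed in $\mathcal{O}_\mathbb{Q}$, the intersection $P^{(1)} = E$, namely $\mathop{\mathrm{End}}(\mathbb{Q}, circ)$ or $\mathop{\mathrm{End}}(\mathbb{Q}, sep)$, is a closed monoid, and similarly $E' := P'^{(1)}$ is closed in $\mathop{\mathrm{Tr}}(\Omega)$. Hence the restriction $\theta\restriction_E\colon E \to E'$ is a monoid isomorphism between closed monoids, and Lemma~5.1 of~\cite{Truss1} reduces continuity of $\theta$ on the whole clone to the single concrete hypothesis that for every $b \in \Omega$ there exists $h \in E$ with finite image such that $\theta\restriction_E(h)(b) = b$.

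This hypothesis is exactly what the idempotent lemmas of the previous sections deliver. Decompose $\Omega$ into $G$-orbits $\Omega_i$ as in section~3 (respectively section~4), fix $b \in \Omega$, pick the unique $i \in I$ with $b \in \Omega_i$, and write $b = a^i_{\underline{B}_{m_i}}$ (respectively $b = a^i_{\underline{B}_{H_i}}$) for the appropriate $B \in [\mathbb{Q}]^{n_i}$. If $n_i = 0$, then $\Omega_i$ is a singleton and any $h \in E$ with finite image does the job. If $n_i \geq 1$, then Lemma~\ref{3.9} in the circular case, and Lemma~\ref{4.2} in the separation case, produces an idempotent endomorphism $h \in E$ with image $B$ satisfying $\theta(h)(b) = b$, and this $h$ plainly has finite image.

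Since every step here is either a direct appeal to Proposition~27 of~\cite{Bodirsky}, to Lemma~5.1 of~\cite{Truss1}, or to Lemmas~\ref{3.9} and~\ref{4.2} already established, there is no genuinely substantive obstacle. The only point that calls for care, and which I would want to spell out explicitly, is the verification that $\theta\restriction_E$ is a legitimate isomorphism between closed monoids so that Lemma~5.1 of~\cite{Truss1} applies; once this routine closedness check is in place, the two assertions of Theorem~\ref{5.2} follow at once.
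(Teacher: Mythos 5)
Your proposal is correct and follows essentially the same route as the paper, which likewise deduces the theorem from Proposition~27 of~\cite{Bodirsky}, Lemma~5.1 of~\cite{Truss1} applied to the unary restriction, and Lemmas~\ref{3.9} and~\ref{4.2} in place of Lemma~\ref{2.5}. Your extra remark disposing of the rank-zero orbits (justified by the analogue of Lemma~\ref{2.4}) is a harmless refinement, not a departure from the paper's argument.
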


\section{Automatic homeomorphicity of the polymorphism clones generated by monoids}
In this section we show how to `lift' the automatic homeomorphicity results for the polymorphism clones 
$\langle\mathop{\mathrm{End}}\left(\mathbb{Q},<\right)\rangle$ 
and $\langle\mathop{\mathrm{End}}\left(\mathbb{Q},\leq \right)\rangle$ generated by 
$\mathop{\mathrm{End}}\left(\mathbb{Q},<\right)$ and $\mathop{\mathrm{End}}\left(\mathbb{Q},\leq \right)$ respectively
proved in \cite{Truss1} to the reducts discussed earlier in the paper, betweenness, circular order and separation relations.

\begin{theorem} \label{6.3} $\langle\mathop{\mathrm{Emb}}\left(\mathbb{Q},betw \right)\rangle$, $\langle\mathop{\mathrm{Emb}}\left(\mathbb{Q},circ\right)\rangle$, and 
 $\langle\mathop{\mathrm{Emb}}\left(\mathbb{Q},sep \right)\rangle$ have automatic homeomorphicity.
\end{theorem}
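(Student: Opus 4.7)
The plan is to apply, in each of the three cases, the general lifting argument of Section~6 of \cite{Truss1}, now fed with the automatic homeomorphicity of the corresponding embedding monoid (Theorems~\ref{2.2}, \ref{3.7}, and \ref{4.1}) in place of the result for $\mathrm{Emb}(\mathbb{Q}, \le)$. For each $R \in \{betw, circ, sep\}$, write $M_R = \mathrm{Emb}(\mathbb{Q}, R)$ and $C_R = \langle M_R \rangle$, and fix a clone isomorphism $\theta\colon C_R \to C'$ to a closed subclone $C' \subseteq \mathcal{O}_\Omega$ on a countable set $\Omega$. Openness of $\theta$ is then immediate from Proposition~27 of \cite{Bodirsky}, exactly as in the proofs of Theorems~\ref{5.1} and~\ref{5.2}.

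For continuity, I would first reduce to the unary part. Since clone isomorphisms preserve arity, $\theta$ restricts to a monoid isomorphism $\theta\restriction_{M_R}\colon M_R \to (C')^{(1)}$, whose target is a closed submonoid of $\mathrm{Tr}(\Omega)$ because $C'$ is closed in $\mathcal{O}_\Omega$. By Theorem~\ref{2.2} (respectively \ref{3.7}, \ref{4.1}), $M_R$ has automatic homeomorphicity, so this restriction is a homeomorphism. The Section~6 argument of \cite{Truss1} then promotes this to continuity on the whole clone: a basic neighborhood of an $n$-ary operation $f$ specified by finitely many constraints $f(q_{1,j}, \ldots, q_{n,j}) = r_j$ for $j = 1, \ldots, k$ can be re-expressed, via clone composition with projections and suitable unary maps drawn from $M_R$, as a condition depending only on the unary part of $C_R$. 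Since $\theta$ respects clone operations and is continuous on the unary part, it maps this refined neighborhood to an open set in $C'$.

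The main obstacle is the separation case, since $\mathrm{Aut}(\mathbb{Q}, sep)$ properly contains $\mathrm{Aut}(\mathbb{Q}, circ)$ as an index~2 subgroup, and the lifting must accommodate orientation-reversing polymorphisms. As in the proofs of Theorems~\ref{2.2} and \ref{4.1}, I expect this to be handled by composing with a suitable involution in the automorphism group; the abundance of such involutions means that the Section~6 argument of \cite{Truss1} adapts with only cosmetic changes. For the betweenness and circular cases no additional work is needed beyond plugging in the respective monoid-level automatic homeomorphicity theorems, so the proofs are essentially mechanical.
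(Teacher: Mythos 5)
Your overall strategy coincides with the paper's: restrict $\theta$ to the unary part, use Theorems \ref{2.2}, \ref{3.7}, \ref{4.1} to conclude that $\theta\restriction_{M_R}\colon M_R\to (C')^{(1)}$ is a homeomorphism onto a closed submonoid of $\mathrm{Tr}(\Omega)$, and then promote continuity from the unary part to all of $\langle M_R\rangle$ via the Section~6 machinery of \cite{Truss1} (this is Corollary~6.3 there, and your sketch is sound because every member of $\langle M_R\rangle$ is of the form $g\circ\pi^{(n)}_i$ with $g\in M_R$ unary). The continuity half of your argument is therefore fine.

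The gap is in the openness step. You assert that openness is ``immediate from Proposition~27 of \cite{Bodirsky}, exactly as in the proofs of Theorems~\ref{5.1} and~\ref{5.2}.'' But Proposition~27 is the tool for the polymorphism clones of the \emph{reflexive} relations, and its applicability there depends on features of those clones (notably that they contain constant operations and, more generally, operations with finite image) which $\langle\mathrm{Emb}(\mathbb{Q},R)\rangle$ completely lacks: every operation in this clone is an injective, essentially unary map composed with a projection. This is exactly why the paper switches tools at this point: openness for the clones generated by the embedding monoids is obtained from Proposition~32 of \cite{Bodirsky} (packaged as Lemma~6.5 of \cite{Truss1}), whose hypotheses are that $\theta\restriction_{M_R}$ is open --- which you do have from the monoid-level theorems --- \emph{and} that $\mathrm{Aut}(\mathbb{Q},R)$ acts transitively on $\mathbb{Q}$, a condition you never mention but which must be checked (it holds for $betw$, $circ$, and $sep$). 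With that substitution your proof goes through. A further remark: your concern that the separation case needs special handling via orientation-reversing involutions is misplaced at this level; the involution trick belongs to the proof of Theorem \ref{4.1}, which you are already using as a black box, and the clone-level lifting argument is identical for all three reducts once transitivity is noted.
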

\begin{proof}
The following paragraph is an almost verbatim copy of the proof of Lemma 6.5 of \cite{Truss1}. 

We consider the case when $M=\mathop{\mathrm{Emb}}\left(\mathbb{Q},betw \right)$. Let 
$\theta\colon \langle M \rangle \to C$ be a clone isomorphism between $\langle M \rangle$ and another closed clone~$C$ 
on a countable set~$\Omega$. Since by Theorem \ref{2.3}, $M$ has automatic homeomorphicity, and the unary part of~$C$ 
is closed as $C^{(1)} = C \cap \mathop{\mathrm{Tr}}(\Omega)$ and both sets are closed, the restriction 
$\theta \restriction_{M}:M \to C^{(1)}$ is a homeomorphism. By \cite{Truss1} Corollary~6.3 we conclude that~$\theta$ is 
continuous. To see that it must be open too, we use Proposition 32 from \cite{Bodirsky1}, which holds for clone isomorphisms 
and is applicable here since $\mathop{\mathrm{Aut}}\left(\mathbb{Q},betw \right)$ acts transitively on $\mathbb{Q}$ and 
$\theta \restriction_M $ is open. Similarly, we obtain automatic homeomorphicity for $\langle M \rangle$, 
where $M=\mathop{\mathrm{Emb}}\left(\mathbb{Q},circ \right)$, from Theorem \ref{3.7} and Lemma 6.5 of \cite{Truss1}, which is 
applicable here since $\mathop{\mathrm{Aut}}\left(\mathbb{Q},circ \right)$ acts also transitively on $\mathbb{Q}$. 
Finally, we get automatic homeomorphicity for $\langle \mathop{\mathrm{Emb}}\left(\mathbb{Q},sep\right) \rangle$, 
from Theorem \ref{4.1} and Lemma 6.5 of \cite{Truss1}.   \end{proof}

Finally, by appealing to Lemma 6.5 of \cite{Truss1} and Theorems \ref{2.8}, \ref{3.10} and \ref{4.3},  respectively, we get:

\begin{theorem} \label{6.4} $\langle\mathop{\mathrm{End}}\left(\mathbb{Q},betw \right)\rangle$, $\langle\mathop{\mathrm{End}}\left(\mathbb{Q},circ\right)\rangle$, and 
 $\langle\mathop{\mathrm{End}}\left(\mathbb{Q},sep \right)\rangle$ have automatic homeomorphicity.
\end{theorem}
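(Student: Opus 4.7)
The plan is to mirror the proof of Theorem \ref{6.3}, replacing the monoid of embeddings by the monoid of endomorphisms throughout. Fix one of the three reducts and write $E$ for its endomorphism monoid of the reflexive relation, so that $\langle E \rangle$ denotes the clone generated by $E$. Let $\theta\colon \langle E \rangle \to C$ be an arbitrary clone isomorphism onto a closed clone $C \subseteq \mathcal{O}_\Omega$ on a countable set $\Omega$; the task is to show $\theta$ is a homeomorphism.

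The first step is to pass to the unary part. Since $C^{(1)} = C \cap \mathrm{Tr}(\Omega)$ is the intersection of two closed sets in $\mathrm{Tr}(\Omega)$, and $E = \langle E \rangle \cap \mathrm{Tr}(\mathbb{Q})$ is closed in $\mathrm{Tr}(\mathbb{Q})$, the restriction $\theta\restriction_E\colon E \to C^{(1)}$ is a monoid isomorphism between closed transformation monoids. By the automatic homeomorphicity of $E$ itself, supplied by Theorem \ref{2.7} for $betw$, Theorem \ref{3.10} for $circ$, and Theorem \ref{4.3} for $sep$, this restriction is a homeomorphism.

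The second step is to invoke Lemma 6.5 of \cite{Truss1}, which promotes a clone isomorphism to a homeomorphism provided its unary restriction is a homeomorphism and the automorphism group of the underlying structure acts transitively on $\mathbb{Q}$. The transitivity hypothesis is immediate in each of our three cases, because $\mathrm{Aut}(\mathbb{Q},\le)$ is already transitive on $\mathbb{Q}$ and is contained in each of $\mathrm{Aut}(\mathbb{Q}, betw)$, $\mathrm{Aut}(\mathbb{Q}, circ)$, and $\mathrm{Aut}(\mathbb{Q}, sep)$. Applying Lemma 6.5 of \cite{Truss1} with the homeomorphism from the first step therefore completes the proof for that reduct, and repeating the argument verbatim for the other two finishes the theorem.

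I do not expect genuine obstacles: all of the technical content sits in Theorems \ref{2.7}, \ref{3.10}, and \ref{4.3}, which have already been proved, together with the transfer mechanism of Lemma 6.5 of \cite{Truss1}, which was designed exactly for this type of bootstrapping. The only thing one needs to double-check is that the cited lemma from \cite{Truss1} is stated in sufficient generality to apply to the endomorphism monoid (not merely to monoids of embeddings); but its proof in \cite{Truss1} uses only that the restriction is a homeomorphism and that $\mathrm{Aut}$ acts transitively on $\mathbb{Q}$, neither of which is sensitive to the difference between embeddings and endomorphisms.
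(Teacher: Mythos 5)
Your proposal is correct and follows essentially the same route as the paper: the paper's proof of Theorem \ref{6.4} is exactly an appeal to Lemma 6.5 of \cite{Truss1} combined with Theorems \ref{2.7}, \ref{3.10} and \ref{4.3}, which is what you spell out (restricting to the unary part, using closedness of $C^{(1)} = C \cap \mathrm{Tr}(\Omega)$, and transitivity of the automorphism group). Your extra check that the lemma is insensitive to the distinction between embeddings and endomorphisms matches how the paper itself uses it.
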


\section{The coloured case}

In this section we remark (without giving full details) how the results earlier in the paper can be easily extended to 
coloured versions. That is, we start with the $C$-coloured version of the rationals ${\mathbb Q}_C$ where 
$2 \le |C| \le \aleph_0$, and form the corresponding reducts, namely the betweenness, circular, and separation relations, and 
obtain analogous automatic homeomorphicity results. Note that, as explained in \cite{Junker}, these are not by any means 
{\em all} the non-trivial reducts, but since exactly what these are is unknown, we just deal with the analogues of the ones 
for $\mathbb Q$. 

We start by considering $({\mathbb Q}_C, \le)$ itself. The main `trick' to deal with this case (and also the circular 
ordering on ${\mathbb Q}_C$) is to define the correct analogue of the classes $\Gamma, \Gamma^+$, $\Gamma^-$, $\Gamma^\pm$ 
(for the circular ordering just $\Gamma$). The main thing to suppose initially is that an injective endomorphism 
$\xi: M \to M$ is given which fixes all members of $G$, and we have to show that it is the identity. Note that exactly as in 
the monochromatic case, $G$ is dense in $M$, so by  Lemma \ref{2.2} automatic homeomorphicity for $M$ follows, since
by \cite{Truss2} Theorem 4.5 we know that ${\rm Aut}({\mathbb Q}_C, \le)$ has the small index property,. 

The definition of $\Gamma$ (etc) in this case is carried out as follows. Given a subset $A$ of ${\mathbb Q}_C$ isomorphic to 
${\mathbb Q}_C$, we again define $x \sim y$ on ${\mathbb Q}_C$ to mean that there is at most one point of $A$ strictly 
between $x$ and $y$. As before, this is an equivalence relation, and the equivalence classes are convex and intersect $A$ in 
at most one point. This time their colours are however relevant. We choose one `extra' colour $c^*$ (i.e. some point not 
lying in $C$), and we colour an equivalence class by the colour of its unique member of $A$, if any, and we colour it by 
$c^*$ otherwise. we can now take $\Gamma$ in this setting to be the family of all $f \in M$ such that ${\mathbb Q}_C$ may be 
written as the disjoint union $\bigcup\{A_q: q \in {\mathbb Q}_{C \cup \{c^*\}}\}$ of convex subsets of ${\mathbb Q}_C$ such 
that $q < r \Rightarrow A_q < A_r$, each $A_q$ is isomorphic to ${\mathbb Q}_C$, if $q \in {\mathbb Q}_{C \cup \{c^*\}}$ is 
coloured by $c \in C$ then $A_q$ has a single point of im $f$, which is coloured $c$, and if $q$ is coloured $c^*$ then $A_q$ 
is disjoint from im $f$. The definitions of $\Gamma^+$, $\Gamma^-$,$\Gamma^\pm$ are similar. 

The lemmas used in \cite{Truss1} to derive automatic homeomorphicity are now transcribed, with appropriate modifications, to 
prove automatic homeomorphicity of ${\rm Emb}({\mathbb Q}_C, \le)$ and ${\rm End}({\mathbb Q}_C, \le)$. 

The passage from the ordered case to the betweenness relation, and from the circular relation to the separation relation are 
performed as before, since the index of the smaller group in the larger is again 2. The main technical lemmas from 
\cite{Truss1} now carry over to the new situation, with colours in $C$ inserted at all the appropriate points, and the 
methods used earlier in this paper used where needed. The conclusion is that the following all have automatic homeomorphicity:
${\rm Emb}({\mathbb Q}_C, \le)$, ${\rm End}({\mathbb Q}_C, \le)$, ${\rm Emb}({\mathbb Q}_C, betw)$, 
${\rm End}({\mathbb Q}_C, betw)$, ${\rm Emb}({\mathbb Q}_C, circ)$, ${\rm End}({\mathbb Q}_C, circ)$,
${\rm Emb}({\mathbb Q}_C, sep)$, ${\rm End}({\mathbb Q}_C, sep)$, the clones generated by all of these, and also 
${\rm Pol}({\mathbb Q}_C, \le)$, ${\rm Pol}({\mathbb Q}_C, betw)$, ${\rm Pol}({\mathbb Q}_C, circ)$, and
${\rm Pol}({\mathbb Q}_C, sep)$.

Briefly, Lemmas \ref{2.1} and \ref{3.8} are readily adapted to the coloured situation, so that for the ordered case, any 
subgroup $H$ of small index may be written as $G_B$ for some uniquely determined finite $B \subset {\mathbb Q}_C$. For the 
reducts, we can still find a unique finite $B$ such that $G_B \le H \le G_{\{B\}}$, and we have the same range of 
possibilities for $H$ as for the monochromatic situation. That is, for the betweenness relation, $H = G_B$ or $G_{\{B\}}$, 
for the circular ordering, $H$ acts on $B$ as a power of some fixed circular map on $B$, and for the separation relation, it 
acts on $B$ as a subgroup of a finite dihedral group. This means that the machinery developed earlier all carries through to 
the coloured case. Note that there are more restrictions here. Thus for instance, for the betweenness relation, the possibility
that $H = G_{\{B\}}$ can only arise if $B$ is `symmetrically' coloured, since otherwise, $B$ will not be preserved setwise by 
any order-reversing automorphism. Similar remarks apply in the other cases. It is still true that all possibilities for $H$ 
must lie in this list, which suffices to make the arguments go through.

\vspace{.1in}

\noindent{\bf Conclusions and problems}

In summary we have given some extensions of the automatic homeomorphicity results of \cite{Bodirsky1} and \cite{Truss1}, by not too complicated modifications of the arguments of the second paper. As 
demonstrated in \cite{Truss1} and here, the methods which apply to the ordered rationals and its reducts have a rather different flavour from those used in \cite{Bodirsky1}. It is not entirely clear how all 
these cases can be extended or generalized. Obvious instances are the (many) remaining reducts of $({\mathbb Q}_C, \le)$ alluded to in the paper. Even to describe what these are may be complicated, as 
explained in \cite{Junker}. A natural extension would be to the case of (2-transitive) trees, originally described in \cite{Droste}, and whose reducts are discussed, in at least one case, in 
\cite{Bodirsky2}, and to more general classes of $\aleph_0$-categorical structures. Finally, we note that our results for the reconstruction of the polymorphism clone apply just to the reflexive case, and 
even for the strict relation on the ordered rationals, this remained open, though an answer has been given in \cite{Barham} (see also the remark at the end of \cite{Bodirsky1}).

\end{document}